\newtheorem{theorem}{Theorem}[section]
\newtheorem{lemma}[theorem]{Lemma}
\newtheorem{claim}[theorem]{Claim}
\newtheorem{definition}[theorem]{Definition}
\newtheorem{observation}[theorem]{Observation}
\newtheorem{question}[theorem]{Question}
\newtheorem{conjecture}[theorem]{Conjecture}
\title{Extending Thomassen's conjecture to directed graphs}
\author{Micha Christoph\thanks{Department of Mathematics, Institute for Operations Research, ETH Z\"{u}rich, Switzerland. \\ \textbf{\{micha.christoph,barnabas.janzer,raphaelmario.steiner\}@math.ethz.ch}. M.C. and R.S. were supported by the SNSF Ambizione Grant No. 216071. B.J. was supported by SNSF grant
200021-19696} \and Barnab\'{a}s Janzer\footnotemark[1]  \and Kalina Petrova\thanks{
Institute of Science and Technology Austria (ISTA), Austria. \textbf{kalina.petrova@ist.ac.at}. This project has received funding from the European Union’s Horizon 2020 research and innovation programme
under the Marie Skłodowska-Curie grant agreement No 101034413.\includegraphics[width=5.5mm, height=4mm]{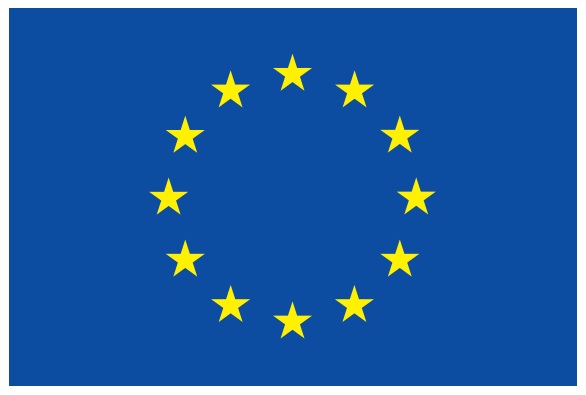}} \and Raphael Steiner\footnotemark[1]  
}
\begin{document}

\maketitle

\begin{abstract}
A famous conjecture by Thomassen from 1983 asserts that for any given $k,g\in \mathbb{N}$ there exists some $d=d(k,g)\in \mathbb{N}$ such that every graph of minimum degree at least $d$ contains a subgraph of minimum degree at least $k$ and girth at least $g$. In this paper, we initiate the systematic study of the directed analogs of Thomassen's conjecture one obtains when replacing minimum degree by minimum out-degree. Concretely, we study which digraphs $F$ are \emph{avoidable} in the sense that there exists $d_F:\mathbb{N}\rightarrow \mathbb{N}$ such that every digraph of minimum out-degree at least $d_F(k)$ contains an $F$-free subdigraph of minimum out-degree at least $k$. 
Among our main results, we show that all orientations of $C_3$ and $C_5$ are avoidable, while one-directed orientations of complete bipartite graphs and all oriented trees are not avoidable. This, in particular, shows that the most direct extension of Thomassen's conjecture to digraphs is false. 
We also fully characterize which digraphs are avoidable when restricting the setting to regular host digraphs. Finally, we raise numerous attractive open problems in the hope of sparking further progress.
\end{abstract}
\section{Introduction}
Thomassen's conjecture~\cite{THOMASSEN} from 1983 states that for all numbers $k$ and $g$ there exists $d=d(k,g)$ such that every graph of minimum degree at least $d$ contains a subgraph of minimum degree at least $k$ and girth at least $g$. Thomassen's conjecture was proved for $g=6$ by Kühn and Osthus~\cite{kuhn2004every}, see also~\cite{benny}. It remains open whether it holds for any $g\ge 7$. 

An equivalent formulation of the conjecture is as follows. Call a graph $F$ \textit{avoidable} if there exists $d_F:\mathbb{N}\rightarrow \mathbb{N}$ such that every graph of minimum degree at least $d_F(k)$ contains an $F$-free subgraph of minimum degree at least $k$. Thomassen's conjecture then states that all graphs containing a cycle are avoidable. If true, this would be a full characterization, as it is easy to see that no forest is avoidable.

In this paper, we initiate the study of the natural analog of Thomassen's conjecture for digraphs. Call a digraph $F$ \textit{avoidable} if there exists some $d_F:\mathbb{N}\rightarrow \mathbb{N}$ such that every digraph of minimum out-degree at least $d_F(k)$ contains an $F$-free subdigraph of minimum out-degree at least $k$. Which digraphs are avoidable? At first glance, this question may seem quite similar to the original conjecture by Thomassen. However, as the following discussion and our main results will demonstrate, for digraphs the situation changes drastically. 

A first key difference between the problems is the behavior of odd cycles: It is folklore (and can be verified by considering a max-cut) that every graph of minimum degree at least $2k-1$ contains a bipartite subgraph of minimum degree at least $k$. This immediately implies that all odd cycles are avoidable, and, thus, for Thomassen's conjecture it suffices to consider even cycles.

However, for digraphs such a simple reduction of the problem to bipartite graphs does not exist, as one can in general no longer pass to a bipartite subdigraph of large minimum out-degree. Indeed, as shown by Thomassen~\cite{thomassen2}, there exist digraphs $D$ with arbitrarily large minimum out-degree such that every directed cycle has odd length. It follows immediately that every bipartite subdigraph of $D$ has minimum out-degree~$0$. Hence, the directed version of Thomassen's problem already becomes non-trivial and interesting when considering orientations of odd cycles. As the first main result of this paper, we show that all orientations of small odd cycles are indeed avoidable.
\begin{theorem}\label{thm: main theorem}
All orientations of $C_3$ and $C_5$ are avoidable.
\end{theorem}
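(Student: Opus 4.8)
The plan is to produce a single shift-based sparsification of the host that simultaneously destroys all orientations of $C_3$ and $C_5$, and then to argue that it can be carried out without losing too much out-degree. Two cheap reductions come first. If $\delta^+(D)\ge d$, then at every vertex whose out-degree exceeds $d$ we delete out-arcs until it equals $d$; since deleting an out-arc affects only the out-degree of its own tail, the result is a spanning $d$-out-regular subdigraph. Passing to a terminal strong component of its condensation, whose vertices keep all of their out-arcs, we may assume moreover that $D$ is $d$-out-regular and strongly connected. Here $d=d_F(k)$ is a parameter we are free to take as large as we wish.

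Fix a large modulus $m$ and put $S:=\{\,x\in\mathbb{Z}/m\mathbb{Z} : m/4<x<3m/10\,\}$, a set of size $\tfrac1{20}m-O(1)$. For a map $\phi\colon V(D)\to\mathbb{Z}/m\mathbb{Z}$, let $D_\phi$ be the subdigraph of $D$ keeping exactly those arcs $uv$ with $\phi(v)-\phi(u)\in S$. I claim that for \emph{every} $\phi$ the digraph $D_\phi$ contains no copy of any orientation of $C_3$ or $C_5$. Indeed, in a copy of such an orientation $F$ whose vertices, taken in cyclic order along the underlying cycle, are $x_1,\dots,x_\ell$ ($\ell\in\{3,5\}$), the consecutive $\phi$-increments $\delta_i$ satisfy $\sum_i\delta_i=0$; if $\varepsilon_i=+1$ when the $i$-th edge runs forwards along this order and $\varepsilon_i=-1$ otherwise, then $s_i:=\varepsilon_i\delta_i\in S$ and $\sum_i\varepsilon_i s_i=0$ in $\mathbb{Z}/m\mathbb{Z}$. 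Up to the dihedral symmetry of the cycle and a global reversal of arcs, the six orientations yield only the five equations
\[
 s_1+s_2+s_3=0,\quad s_1+s_2=s_3,\quad s_1+\dots+s_5=0,\quad s_1+s_2+s_3+s_4=s_5,\quad s_1+s_2=s_3+s_4+s_5
\]
(with all $s_i\in S$; the two orientations of $C_5$ having two ``backward'' edges both reduce to the last equation). But for $s_i\in S$ the integer ``left side minus right side'' always lies strictly inside one of the open intervals $(\tfrac34 m,\tfrac9{10}m)$, $(\tfrac15 m,\tfrac7{20}m)$, $(\tfrac54 m,\tfrac32 m)$, $(\tfrac7{10}m,\tfrac{19}{20}m)$ and $(-\tfrac25 m,-\tfrac3{20}m)$ respectively, none of which contains a multiple of $m$; hence none of these equations is solvable, and $D_\phi$ is $F$-free for all six $F$ simultaneously.

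It remains to choose $\phi$ with $\delta^+(D_\phi)\ge k$. Take $\phi$ uniformly random and independently over $V(D)$. Conditioned on $\phi(v)$, the out-degree $\deg^+_{D_\phi}(v)$ is a sum of $d$ independent $\mathrm{Bernoulli}(|S|/m)$ variables, and $|S|/m\ge\tfrac1{30}$, so by a Chernoff bound $\Pr[\deg^+_{D_\phi}(v)<k]\le e^{-\Omega(d)}$ provided $d\gg k$. The bad event at $v$ is determined by $\phi$ on $N^+[v]$, and two bad events can interfere only when the corresponding closed out-neighbourhoods intersect, so the dependency degree is at most $(d+1)(\Delta^-(D)+1)$. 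The Lovász Local Lemma therefore yields a suitable $\phi$ as soon as $\Delta^-(D)\le e^{\Omega(d)}$, a constraint mild enough to be absorbed by the freedom in $d$. Since $S$ works for all six digraphs at once, this single argument covers every orientation of $C_3$ and $C_5$.

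The real content, and the step I expect to be the main obstacle, is thus reducing to hosts of controlled in-degree, or more robustly of controlled local structure. High out-degree alone does not bound in-degrees, and bluntly deleting high-in-degree vertices can cascade and destroy all out-degree — as in a long blow-up of a directed cycle with one vertex class designated a ``hub''. My plan here is an iterative cleaning combined with a size dichotomy: repeatedly replace $D$ by a terminal strong component; if it is small, finish on that bounded piece directly (a union bound over boundedly many vertices suffices, $d$ being huge), and if it is large, extract from it a subdigraph still of minimum out-degree $\ge d$ but whose in-degrees are bounded in terms of $d$, exploiting that the high-in-degree spots of a large strongly connected $d$-out-regular digraph live inside small, nearly-regular substructures one can either peel off or route around. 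An alternative is to build $\phi$ greedily along a vertex ordering in which all but a bounded ``tail'' have at least $30k$ already-placed out-neighbours, picking each new value — possible by averaging, as $|S|/m\ge\tfrac1{30}$ — inside at least $k$ of the translates $\phi(w)-S$ over those neighbours, and handling the tail by brute force. Turning either scheme into a clean quantitative statement is where the real work lies.
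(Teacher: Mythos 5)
Your shift-based sparsification is a genuinely elegant observation, and the core combinatorial check is correct: taking $S=\{x\in\mathbb{Z}/m\mathbb{Z}: m/4<x<3m/10\}$ and keeping only arcs $uv$ with $\phi(v)-\phi(u)\in S$ does simultaneously destroy every orientation of $C_3$ and $C_5$, since the relevant signed sums $\sum\varepsilon_i s_i$ land in intervals strictly between consecutive multiples of $m$ (I checked all five equations, including the ``$3$ versus $2$'' case covering both $C_5^{(3)}$ and $C_5^{(4)}$). This is a cleaner and more uniform way to kill all six digraphs at once than the paper's route, which treats them case by case: the paper invokes the Dellamonica--Koubek--Martin--R\"odl theorem for the directed cycles $C_3^{(1)}$, $C_5^{(1)}$, uses Alon's majority $3$-coloring plus a ``$s$-typing'' lemma to exclude $C_3^{(2)}$, $C_5^{(3)}$, $C_5^{(4)}$ for free, and then runs a bespoke argument for $C_5^{(2)}$.

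However, there is a genuine gap, and you yourself flag it: you have no working mechanism to choose $\phi$ so that $\delta^+(D_\phi)\geq k$. This is not a technicality to be ``absorbed by the freedom in $d$''; it is the whole difficulty. Your LLL argument has dependency degree on the order of $(d+1)\Delta^-(D)$, and $\Delta^-(D)$ is not bounded as a function of $d$ --- there are strongly connected $d$-out-regular digraphs with a single hub vertex of in-degree $|V(D)|$, so passing to a terminal strong component buys you nothing here. The proposed greedy fallback also fails in general: in a balanced bipartite digraph with $A$ and $B$ of equal size where every $a\in A$ has its $d$ out-neighbours in $B$ and every $b\in B$ has its $d$ out-neighbours in $A$, any vertex ordering needs a positive fraction of $B$ placed before most of $A$ can have $30k$ out-neighbours placed, and symmetrically a positive fraction of $A$ placed before most of $B$, forcing a ``tail'' of size $\Omega(n/k)$ rather than $O_d(1)$. (That particular example is bipartite, hence vacuously $C_3$- and $C_5$-free, but it shows the proposed ordering lemma is false and cannot simply be plugged in.) Likewise, ``extract a subdigraph of minimum out-degree $d$ with in-degrees bounded in terms of $d$'' is exactly the kind of statement that fails badly in general, as you note in the hub example.

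This degree-retention step is precisely where the paper invests all its technical work. After the typing reduction, the remaining orientation $C_5^{(2)}$ is handled in Lemma~\ref{lem: C5} by a two-stage argument: an auxiliary multidigraph $H(D')$ recording potential $C_5^{(2)}$'s is shown to be $O(dk^2)$-degenerate (Observation~\ref{obs: degenerate}), which produces a good vertex ordering, and then a careful conditional Lovász Local Lemma (Claim~\ref{clm: main claim}) is used to subsample out-arcs while controlling dependencies through that ordering --- crucially avoiding any need to bound $\Delta^-(D)$ directly. Your proposal would need something playing the same structural role (either a substitute degeneracy statement that works for the shift graph $D_\phi$, or a reduction to bounded in-degree that you can actually prove), and at present neither is supplied. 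As written, the proof is incomplete: a beautiful sparsification rule with no proof that it can be applied without collapsing the out-degree.
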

Recall that the \emph{odd-girth} of a graph is the length of its shortest odd cycle. Then, in particular, our arguments show that there exists a polynomially bounded function $f:\mathbb{N}\rightarrow \mathbb{N}$ such that every digraph of minimum out-degree at least $f(k)$ contains a subdigraph of minimum out-degree at least $k$ whose underlying graph has odd-girth at least $7$.

The special cases of the above result when we consider the \emph{directed} orientations of $C_3$ and $C_5$ were already previously known. Namely, Dellamonica, Koubek, Martin and R\"odl~\cite{directed_cycles} proved, using a clever application of the Lov\'{a}sz local lemma, that all directed cycles (of arbitrary length) are avoidable. Unfortunately, their proof technique is quite specifically suited to the directed orientations of cycles and does not generalize to any other, non-directed, orientations of cycles.
\begin{theorem}[Dellamonica, Koubek, Martin and R\"odl~\cite{directed_cycles}]\label{lem:remove_directed_cycles}
    Every directed cycle is avoidable.
\end{theorem}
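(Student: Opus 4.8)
The plan is to prove this by the standard combination of random sparsification and the symmetric Lov\'{a}sz Local Lemma (LLL), which is presumably the ``clever application'' referred to above. Fix the length $\ell$ of the directed cycle in question and write $F$ for the directed cycle on $\ell$ vertices. I will argue that one may take $d_F(k)$ to be of order $\ell k^{\ell}$ — say $d_F(k) := \lceil e\ell k^{\ell}\rceil$ — so that, for each fixed $\ell$, the bound $d_F$ is polynomial in $k$, matching the ``polynomially bounded'' phenomenon noted in the discussion above.

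First I would reduce to a host digraph of bounded out-degree. Given any digraph $D$ with minimum out-degree at least $d := d_F(k)$, delete arcs to obtain a spanning subdigraph $D_0 \subseteq D$ in which every vertex has out-degree exactly $d$; since any $F$-free subdigraph of $D_0$ of minimum out-degree at least $k$ is also such a subdigraph of $D$, it suffices to work inside $D_0$. The reason this step matters is the crude counting bound it buys: every directed $\ell$-cycle through a fixed vertex $v$ is obtained by choosing an out-neighbour at each of the $\ell-1$ vertices following $v$ along the cycle, so at most $d^{\ell-1}$ directed $\ell$-cycles pass through any given vertex of $D_0$.

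Next I would sparsify randomly: independently for each vertex $v$, choose a uniformly random $k$-element subset $R_v$ of the out-neighbourhood of $v$ in $D_0$ (note $k \le d$), and let $D'$ be the spanning subdigraph with arc set $\{(v,w) : w \in R_v\}$. Every vertex of $D'$ then has out-degree exactly $k$, so the out-degree requirement is satisfied automatically, and only $F$-freeness remains to be arranged. For each directed $\ell$-cycle $C$ of $D_0$, let $A_C$ be the event that all $\ell$ arcs of $C$ survive in $D'$. Independence of the choices at distinct vertices gives $\Pr[A_C] = (k/d)^{\ell} =: p$, and $A_C$ is mutually independent of the family of all $A_{C'}$ with $V(C') \cap V(C) = \emptyset$; by the counting bound above, at most $\ell d^{\ell-1}$ cycles $C'$ meet $C$. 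The symmetric LLL hypothesis $e\,p\,\ell d^{\ell-1} \le 1$ therefore holds once $d \ge e\ell k^{\ell}$, so with positive probability none of the $A_C$ occurs, i.e.\ $D'$ contains no directed $\ell$-cycle. Such a $D'$ is the desired $F$-free subdigraph of minimum out-degree at least $k$.

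The only genuinely delicate point, and the reason the argument is ``quite specifically suited to the directed orientations of cycles'', is the bounded-out-degree reduction together with the counting it enables: the symmetric LLL needs the number of copies of $F$ through any vertex to be controlled in terms of $p^{-1}$, and for the directed cycle this is exactly what Step~1 delivers. For a non-directed orientation of $C_\ell$ this breaks down, because traversing such an orientation pattern forces some steps along \emph{in}-arcs, and in-degrees cannot be bounded without destroying the out-degrees one is trying to preserve; hence the number of copies through a vertex — and thus the LLL dependency degree — is no longer under control. Beyond this point the argument above is routine.
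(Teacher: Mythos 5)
The paper does not prove this theorem; it cites it to Dellamonica, Koubek, Martin and R\"odl and only remarks that their proof is an application of the Lov\'asz Local Lemma. Your argument --- cap the out-degree at $d$, sample a uniform $k$-subset of each out-neighbourhood independently, observe $\Pr[A_C]=(k/d)^\ell$ with mutual independence outside vertex-disjoint cycles, bound the dependency degree by $\ell d^{\ell-1}$ via the out-degree cap, and invoke the symmetric LLL with $d\geq e\ell k^{\ell}$ --- is a correct, self-contained proof and matches the approach the paper attributes to that reference, and your closing remark on why the dependency count breaks for non-directed orientations is exactly the paper's point.
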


As our second main result, we show the obvious extension of Thomassen's conjecture to digraphs is false: For every even-length cycle, its anti-directed orientation (with alternating directions of arcs) is not avoidable. This follows from the following stronger result, proving that even arbitrarily large complete bipartite graphs cannot be avoided. A \emph{one-directed complete bipartite graph} is an orientation of a complete bipartite graph all whose arcs are directed from one to the other color class.
\begin{theorem}\label{thm:complete directed pair}
No one-directed complete bipartite graph is avoidable.
\end{theorem}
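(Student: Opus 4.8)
To disprove avoidability of a one-directed complete bipartite graph $F$ with side sizes $s$ and $t$ (source side $A$ of size $s$, sink side $B$ of size $t$, all $st$ arcs oriented from $A$ to $B$), it suffices to fix a single constant $k=k(s,t)$ and, for every $N$, construct a digraph $H_N$ with $\delta^+(H_N)\ge N$ in which every subdigraph of minimum out-degree at least $k$ contains a copy of $F$; this forces $d_F(k)$ to be infinite. Call such a digraph \emph{$k$-robust}; equivalently, $D$ is $k$-robust exactly when every $F$-free subdigraph of $D$ has a vertex of out-degree less than $k$. The cases $s=1$ and $t=1$ are immediate (a digraph with $\delta^+\ge t$ has a vertex of out-degree $\ge t$; one with $\delta^+\ge s$ has, by averaging, a vertex of in-degree $\ge s$), so assume $s,t\ge 2$; the goal is then to build $k$-robust digraphs of unbounded minimum out-degree.

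I would construct these recursively. For the base case take the complete digraph $K_{k+1}^{\leftrightarrow}$ (choosing $k\ge s+t-1$): it is $k$-out-regular with minimum in-degree $\ge 1$, so its only subdigraph of minimum out-degree $\ge k$ is itself, which contains $F$ since $k+1\ge s+t$. The inductive step needs an \emph{amplification} operation producing, from a $k$-robust digraph $H$, a $k$-robust digraph of strictly larger minimum out-degree, and this is where the difficulty is concentrated. All the obvious amplifications fail for the same reason: if one blows up a short directed cycle with complete bipartite links, or takes a lexicographic product with a complete digraph, or attaches a fresh vertex set joined completely in both directions, then an adversary can thin each complete bipartite link between parts of size $m$ to a $C_4$-free bipartite graph of minimum degree $\Omega(\sqrt m)\ge k$, retreat into a bounded-size $F$-free corner, and still keep $\delta^+\ge k$ --- so the result is not $k$-robust. (For the same reason $K_n^{\leftrightarrow}$ itself stops being $k$-robust once $n$ is large, since it then contains $F$-free subdigraphs of minimum out-degree $k$.) The amplification must therefore be arranged so that the new arcs which raise the out-degrees are themselves organised into $k$-robust pieces: thinning such a piece cannot destroy all its copies of $F$ without dropping its minimum out-degree below $k$, which is exactly what blocks the retreat. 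Concretely, I would let $H^+$ consist of many disjoint copies of $H$ glued by further arcs arranged so that (i) every vertex gains many new out-neighbours, raising $\delta^+$; (ii) restricted to any region where the out-degree survives above $k$, these new arcs form a copy of a previously built $k$-robust digraph; and (iii) every vertex is ``load-bearing'', so that discarding it forces cascading deletions.

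Granting such an $H^+$, one would analyse an $F$-free subdigraph $J\subseteq H^+$ with $\delta^+(J)\ge k$ as follows. If the subdigraph of $J$ lying in some $k$-robust piece (a copy of $H$, or one of the connecting gadgets) still has minimum out-degree at least $k$, the induction hypothesis gives a copy of $F$ inside $J$, a contradiction. Otherwise every such piece contains a vertex of out-degree less than $k$ within it, forcing essentially every vertex to route its out-arcs across complete bipartite links between pieces; one then argues that, after accounting for the $F$-free thinning of each link, too many vertices must point with out-degree at least $t$ into a common set of bounded size. A K\H{o}v\'ari--S\'os--Tur\'an-type pigeonhole then closes the argument: in an $F$-free digraph at most $s-1$ vertices can have any fixed $t$-set as common out-neighbourhood, so if more than that many vertices each point with out-degree $\ge t$ into a bounded set, some $s$ of them share a $t$-set, yielding a copy of $F$ --- again a contradiction.

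The crux, and the step requiring genuine care, is property (iii): arranging the gluing so that the adversary can discard neither a single vertex nor any large vertex set ``for free'', i.e.\ so that every $F$-free subdigraph of $H^+$ that keeps $\delta^+\ge k$ is pinned either inside a $k$-robust piece (where induction bites) or into a bounded region with enough vertices pointing into a small set (where the pigeonhole bites). Every naive construction breaks down precisely here, so making ``load-bearing'' precise --- presumably by taking the connecting gadgets themselves to be recursively $k$-robust and interleaving the copies so that every ``retreat direction'' leads back into a $k$-robust region of comparable size --- is the heart of the matter.
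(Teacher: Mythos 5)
Your high-level reduction is fine, the trivial cases $s=1$, $t=1$ are handled correctly, and you have correctly identified both the closing pigeonhole step and the main obstruction (an adversary thinning a balanced complete bipartite link to a $C_4$-free graph of polynomial minimum degree and retreating into a bounded $F$-free corner). But the proposal has a genuine gap, which you yourself flag: the amplification $H^+$ is never constructed, only stipulated via properties (i)--(iii), and you say outright that making (iii) precise is ``the heart of the matter.'' As written, the argument is conditional on an object you have not built. The recursive-amplification framing is also a harder road than needed; the paper's construction is direct and requires no induction on constructions at all.

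The missing idea is to make the complete bipartite gadget \emph{asymmetric} and to force every surviving region to traverse it. Concretely, take $d$ disjoint $d$-out-arborescences $T_1,\dots,T_d$ of height $\ell=\bigl\lceil\log_k\bigl(|A|\binom{d}{k}\bigr)\bigr\rceil$, and add an arc from every leaf to every root; call the result $D$, which has $\delta^+(D)=d$. All arcs inside an arborescence point away from its root, so every directed cycle in $D$ passes through a root. Hence any $D'\subseteq D$ with $\delta^+(D')\ge k$ (take $k\ge|B|\ge 1$) contains a directed cycle and therefore a root $r$ of some $T_i$; following out-arcs inside $T_i$, the out-degree condition forces $D'$ to retain at least $k^\ell\ge|A|\binom{d}{k}$ leaves of $T_i$. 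Each retained leaf has at least $k$ out-neighbours among the only $d$ roots, so by pigeonhole at least $|A|$ of them share a common $k$-subset of roots, giving a copy of $(A,B)$ since $k\ge|B|$. This resolves both of your worries at once: the leaf-to-root gadget has a bounded side (size $d$) against an exponentially large side, so a $C_4$-free thinning from the large side still funnels many vertices into a small set, which is exactly what the pigeonhole needs; and the ``load-bearing'' property you sought is supplied for free by the cycles-through-roots observation together with the $d$-ary tree structure, which forces any surviving out-degree-$\ge k$ region to pick up a root and then cascade down to exponentially many leaves.
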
 

This exhibits an interesting difference between the undirected and directed settings, since, as mentioned, it is known that the undirected $C_4$ is avoidable, while by Theorem~\ref{thm:complete directed pair} its anti-directed orientation is not.

Next, let us consider the avoidability of digraphs without cycles, i.e., orientations of forests. In the undirected setting, a simple greedy embedding proves that every graph of minimum degree at least $k-1$ contains every forest on $k$ vertices as a subgraph, and hence, no forest is avoidable in undirected graphs. However, the same reasoning does not apply to oriented forests: It was recently shown by Hons et al.~\cite{hons2025unavoidablesubgraphsdigraphslarge} and the first and last authors~\cite{KAMAK} that only a rather restricted class of oriented forests, so-called \emph{grounded forests} (definition following further below) can be forced by high minimum out-degree. Despite this discrepancy, our next main result shows that no oriented forests are avoidable.\begin{theorem}\label{thm:treeavoidable}
No oriented forest is avoidable.
\end{theorem}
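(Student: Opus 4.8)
The plan is to deduce the theorem from the stronger statement that \emph{every one-directed complete multipartite digraph is not avoidable}; here a \emph{one-directed complete $\ell$-partite digraph} has vertex classes $P_1,\dots,P_\ell$ and contains all arcs directed from $P_i$ to $P_j$ whenever $i<j$, so that Theorem~\ref{thm:complete directed pair} is exactly the case $\ell=2$. This reduction rests on two simple observations. First, if $F$ is a subdigraph of $G$ and $G$ is not avoidable, then $F$ is not avoidable: the very same host digraphs and the very same threshold $k$ that witness non-avoidability of $G$ also work for $F$, since any subdigraph of minimum out-degree at least $k$ that contains $G$ automatically contains $F$. Second, every oriented forest $F$ on $n$ vertices embeds into the one-directed complete $n$-partite digraph $M$ all of whose classes have size $n$: since $F$ is acyclic, on each connected component the ``level'' function $\ell$, determined up to an additive constant by $\ell(v)-\ell(u)=1$ for every arc $uv$, is well defined, and after translating each component so that its levels lie in $\{0,1,\dots,n-1\}$, sending a vertex of level $i$ to a private vertex of the $i$-th class of $M$ realizes every arc of $F$ (each arc joins two consecutive classes) without ever putting more than $n$ vertices into one class. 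Combining the two observations, it suffices to prove that one-directed complete multipartite digraphs are not avoidable.

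I would prove this by induction on the number $\ell$ of classes, the base case $\ell=2$ being Theorem~\ref{thm:complete directed pair}. Informally, the inductive step must force inside every high out-degree subdigraph a ``chain'' of one-directed complete bipartite digraphs glued end to end. The natural approach is to build the $\ell$-class host \emph{recursively}: take the bipartite host supplied by the $\ell=2$ construction, tuned so that for every $d$ there is a host of minimum out-degree at least $d$ in which every subdigraph of minimum out-degree at least some $k_2=k_2(\ell,r)$ contains a one-directed complete bipartite digraph with both classes of size $s$, for a large $s=s(\ell,r)$; then graft onto its ``sink side'' a copy of the $(\ell-1)$-class host in such a way that the sink side of any such forced bipartite digraph lies inside a copy of the $(\ell-1)$-class host. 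Given a subdigraph $D'$ of minimum out-degree at least $k_\ell$ (chosen at least $k_{\ell-1}$), the $\ell=2$ gadget then yields a one-directed complete bipartite subdigraph on classes $A',B'$ with $|B'|=s$; every vertex of $B'$ still has out-degree at least $k_\ell$ in $D'$, so passing to the forward closure of $B'$ inside the grafted copy produces a subdigraph of minimum out-degree at least $k_{\ell-1}$ to which the inductive hypothesis applies, giving a one-directed complete $(\ell-1)$-partite subdigraph on classes $Q_1,\dots,Q_{\ell-1}$ inside $B'$; finally an $r$-subset of $A'$, which is joined by all arcs to the whole of $B'\supseteq Q_1\cup\dots\cup Q_{\ell-1}$, serves as an additional first class, and together with $Q_1,\dots,Q_{\ell-1}$ this is the desired one-directed complete $\ell$-partite digraph.

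The main obstacle is exactly this grafting/alignment step: one must set up the host so that \emph{whichever} large one-directed complete bipartite subdigraph the $\ell=2$ argument happens to extract, its sink side is a genuine copy of the $(\ell-1)$-class host rather than an unstructured vertex set of the right size, and so that the two pieces actually concatenate into a single one-directed complete $\ell$-partite digraph — all while keeping the threshold $k_\ell$ bounded purely in terms of $\ell$ and $r$, never in terms of the host's minimum out-degree $d$ (which has to be allowed arbitrarily large). Achieving this presumably forces one to make the $\ell=2$ construction underlying Theorem~\ref{thm:complete directed pair} sufficiently robust and modular to carry the recursion, and that is the technical heart of the argument. Finally, note that this single argument handles all oriented forests at once — in particular it re-derives the non-avoidability of grounded forests, consistently with the fact, recalled above, that those are already forced by large minimum out-degree.
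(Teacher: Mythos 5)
Your reduction is sound as far as it goes: the observation that non-avoidability is inherited by subdigraphs is correct, and the height-function embedding of an $n$-vertex oriented forest into a one-directed complete $n$-partite digraph with classes of size $n$ is also correct. But the entire content of the theorem is then pushed into the claim that one-directed complete multipartite digraphs (or at least blow-ups of directed paths) are not avoidable, and that claim is not proved. You give a base case ($\ell=2$, Theorem~\ref{thm:complete directed pair}) and a one-paragraph sketch of an inductive step, and you yourself flag the grafting/alignment step as ``the technical heart of the argument'' without resolving it. That is a genuine gap, not a detail: as described, the grafting fails on the actual construction behind Theorem~\ref{thm:complete directed pair}. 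There, the forced copy of $(A,B)$ has its sink side $B'$ equal to a set of $k$ roots of $k$ \emph{pairwise disjoint} arborescences $T_{i_1},\dots,T_{i_k}$, so the out-neighborhoods of the vertices of $B'$ are pairwise disjoint. Consequently $B'$ has no common out-neighbor at all, and no further complete bipartite layer can be appended below $B'$; the ``forward closure of $B'$'' is a disjoint union of $k$ unrelated pieces, not a single copy of an $(\ell-1)$-class host. Making the chain concatenate would require redesigning the host from scratch, which is exactly the part you have not done. (Whether your stronger multipartite claim is even true is a separate question you would also need to settle.)

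For contrast, the paper avoids forcing any complete bipartite connections beyond the bottom layer. Lemma~\ref{lem:digraphtree} recursively builds a layered host in which every high-out-degree subdigraph containing the root must contain, in its bottom $t$ layers, an induced subdigraph where all middle-layer vertices have \emph{both} in- and out-degree at least $k$. An oriented forest is then embedded greedily, one vertex at a time: each new vertex is joined to an already-embedded one by a single arc, and large in-degree or large out-degree at the image supplies a fresh image one layer up or down. This is a much weaker structural conclusion than a complete multipartite digraph, which is precisely why it can be forced; your plan asks for considerably more than the theorem needs, and the surplus is where it breaks.
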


As a first step towards Thomassen's conjecture one might attempt to prove the statement for $d$-regular graphs instead of graphs with minimum degree $d$. This happens to be not so difficult and a suitable random sub-sampling of edges combined with Lov\'{a}sz' local lemma can be used to find spanning subgraphs of $d$-regular graphs with high minimum degree and girth. As a consequence, the only graphs which cannot be avoided in this setting are forests. One might impose a similar restriction for digraphs. A digraph $D$ is called \emph{$d$-regular} if every vertex has in- and out-degree $d$. We say that a digraph $F$ is \textit{regular-avoidable} if there exists $d_F:\mathbb{N}\rightarrow \mathbb{N}$ such that every $d_F(k)$-regular digraph contains an $F$-free subdigraph of minimum out-degree $k$. As it turns out, just like in the undirected case, it is much simpler to classify which digraphs are regular-avoidable.

To state the result, we need to give a precise definition of the aforementioned \emph{grounded forests}. Let us define a \emph{height function} of a digraph $D$ as any mapping $h:V(D)\rightarrow \mathbb{Z}$ such that $h(v) = h(u)+1$ for every arc $(u,v)$ of $D$. Note that every oriented forest admits a height function and that the latter is unique up to uniform shifts within connected components. We say that an oriented forest is a \emph{grounded forest} if it admits a height function that is constant on the set of vertices of in-degree at least $2$. Our last main result precisely characterizes regular-avoidable digraphs, as follows.
\begin{theorem}\label{thm: regular avoidable}
    A digraph is regular-avoidable if and only if it is not a grounded forest.
\end{theorem}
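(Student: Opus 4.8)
The theorem is a characterization, so I must prove both implications. For the easy direction, suppose $F$ is a grounded forest; I claim it is not regular-avoidable. By the work of Hons et al.~\cite{hons2025unavoidablesubgraphsdigraphslarge} and~\cite{KAMAK} (the ``only if'' direction quoted in the introduction, together with its converse, which is also established there), every grounded forest is \emph{forced} by large minimum out-degree: there is $N_F$ such that every digraph of minimum out-degree at least $N_F$ contains $F$. Granting this, if $F$ were regular-avoidable with witness $d_F$, then taking $k:=N_F$ and any $d_F(N_F)$-regular digraph (these exist, e.g.\ suitable Cayley digraphs), the promised $F$-free subdigraph $D'$ with $\delta^+(D')\ge N_F$ would itself be a digraph of minimum out-degree at least $N_F$, hence would contain $F$ -- a contradiction.

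For the converse, let $F$ be a digraph that is not a grounded forest. If the underlying graph of $F$ contains a cycle, fix a shortest cycle $C\subseteq F$, viewed as a subdigraph, with $g:=|V(C)|=|E(C)|\ge 2$ (a digon if $g=2$, otherwise an orientation of $C_g$). Since every copy of $F$ contains a copy of $C$, it suffices to find in the given $d$-regular host a $C$-free subdigraph of minimum out-degree $\ge k$, and for this I would run the known argument for the undirected regular case, adapted to track out-degrees instead of degrees: keep each arc independently with probability $p=\Theta\big(d^{-(g-1)/g}/\log d\big)$, and apply the Lov\'{a}sz Local Lemma to the bad events ``$v$ has out-degree $<k$'' (probability $e^{-\Omega(pd)}$ by Chernoff, since $pd=\Theta(d^{1/g}/\log d)\to\infty$) and ``some copy of $C$ survives'' (probability $p^g$, with only $O(d^{\,g-2})$ copies of $C$ through any fixed arc). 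The standard computation shows these satisfy the LLL hypotheses once $d$ is large. (Running the same argument with bad events ``some cycle of length at most $5$ survives'' also recovers, for regular hosts, the odd-girth-$7$ statement from the introduction.)

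The remaining and main case is $F$ an oriented forest that is not grounded; here sparsification cannot help (forests survive arc-deletion), so the subdigraph must instead be given a rigid structure excluding $F$. Since $F$ is not grounded, some component $F_0$ has two in-degree-$\ge2$ vertices at distinct heights $H_1<H_2$. Put $\ell:=|V(F)|$, which exceeds the height-span of $F_0$, and call a digraph \emph{stratified} if its vertex set has a partition $W_0\sqcup\cdots\sqcup W_{\ell-1}$ with every arc running from some $W_i$ to $W_{i+1}$ (indices mod $\ell$) and every vertex of in-degree $\ge 2$ lying in $W_0$. A stratified digraph is $F$-free (indeed $F_0$-free): an embedding $\phi\colon F_0\hookrightarrow D'$ would force the level of $\phi(z)$ to be $h(z)+c\pmod\ell$ for a fixed $c$ (by connectivity of $F_0$), while the images of the two in-degree-$\ge2$ vertices again have in-degree $\ge2$, hence lie in $W_0$, giving $H_2\equiv H_1\pmod\ell$ -- impossible since $0<H_2-H_1<\ell$. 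So it suffices to realize a stratified subdigraph of minimum out-degree $\ge k$ inside an arbitrary $d$-regular $D$, with $d\ge d_F(k)$ to be chosen. Because $W_1,\dots,W_{\ell-1}$ may have in-degree only $1$ while every vertex needs out-degree $\ge k$, the sizes $|W_i|$ are forced to grow geometrically (by roughly a $\log d$ factor per level), so $|V(D)|$ must exceed about $k^{\ell}$ -- harmless, as $|V(D)|\ge d+1$ and $d_F(k)$ is ours. The construction then has two phases: (i) by a Local Lemma argument (exponentially small probabilities, polynomial dependencies) fix a partition of $V(D)$ into parts of the prescribed geometric sizes in which every vertex has the expected number $(1\pm o(1))d|W_i|/|V(D)|$ of in- and out-neighbours in each $W_i$; (ii) for $0\le i\le\ell-2$ let every vertex of $W_{i+1}$ choose a uniformly random in-neighbour in $W_i$ as its sole parent, and keep all arcs from $W_{\ell-1}$ to $W_0$. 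The resulting digraph is stratified by construction, and a second Local Lemma argument over the independent parent-choices -- where each vertex of $W_i$ acquires $\mathrm{Bin}$-distributed many children with mean $\approx|W_{i+1}|/|W_i|\gg k$, concentrated -- guarantees minimum out-degree $\ge k$.

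I expect phase (ii) of this last construction to be the most delicate part: fixing the correct level sizes and the resulting lower bound on $d_F(k)$, verifying that the ``balanced'' partition of phase (i) exists, and running the two-phase Local Lemma bookkeeping so that every vertex genuinely ends up with at least $k$ kept out-arcs. Combining this with the cycle case completes the converse, and together with the first implication this yields the claimed characterization.
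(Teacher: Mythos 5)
Your proposal is correct and takes essentially the same approach as the paper: the easy direction via the forcing of grounded forests, the cycle case via Lov\'{a}sz Local Lemma arc-sparsification (you use $p=\Theta(d^{-(g-1)/g}/\log d)$ where the paper sets $d=2k^{\ell+1}$ and $p=k^{-\ell}$, but both satisfy the LLL condition), and the non-grounded-forest case via a cyclically layered partition with geometrically growing levels in which each non-base-layer vertex retains a single random in-arc, forcing all in-degree-$\ge 2$ vertices into one layer and contradicting the unequal heights. The paper's Lemma~\ref{lem: partitioning} carries out exactly your two-phase LLL plan (random coloring with geometric probabilities $p_i$ of ratio $6k$, then random single-parent selection), so the parts you flag as delicate are handled there with the same bookkeeping you outline.
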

\section{Notation and preliminaries}
Throughout the paper, given a digraph $D$, we denote by $V(D)$ and $A(D)\subseteq \{(u,v)\in V(D)^2\mid u\neq v\}$ its set of vertices and arcs, respectively, and we consider an arc $(u,v)$ as starting in $u$ and ending at $v$, and call $u$ the \emph{tail} and $v$ the \emph{head} of the arc. The \emph{out-neighborhood} $N_D^+(v)$ and \emph{in-neighborhood} $N_D^-(v)$ are defined as the set of vertices to or from which $v$ has an arc. The out- and in-degree of a vertex $v$ are defined as $d_D^+(v):=|N_D^+(v)|$ and $d_D^-(v):=|N_D^-(v)|$, respectively. The minimum/maximum out- and in-degrees, denoted by $\delta^+(D), \delta^-(D), \Delta^+(D), \Delta^-(D)$ respectively, are defined accordingly. For $V\subseteq V(D)$ and an integer $i$, we write $N_i^+(V)$ (respectively $N_i^-(V)$) for the set of vertices to (respectively from) which there is a directed walk of length $i$ from (respectively to) $V$.

Throughout the paper, we work with digraphs of arbitrary size but bounded out-degree. In order to use random sub-sampling tricks while maintaining large minimum out-degree, we use the Lov\'asz Local Lemma.\begin{lemma}[The Lov\'asz Local Lemma~\cite{LLL,shearer1985problem}]
\label{lem:LLL}
Let $A_1,\ldots A_t$ be a sequence of events, each of which occurs with probability at most $q$ and is independent of all the other events, except at most $\Delta$ of them. Then, with positive probability none of the events occur if $eq\Delta<1$.
\end{lemma}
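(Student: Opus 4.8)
The plan is to run the classical inductive proof of the (general) Lov\'asz Local Lemma and specialise it to the symmetric setting. Fix the underlying probability space, and let $G$ be a \emph{dependency digraph} for $A_1,\dots,A_t$: a digraph on vertex set $[t]$ such that each $A_i$ is mutually independent of the family $\{A_j : j\neq i,\ (i,j)\notin A(G)\}$. By hypothesis we may take $\Delta^+(G)\le \Delta$. Put $x:=\tfrac{1}{\Delta+1}$. The core of the proof is the following claim, established by induction on $|S|$:
\begin{equation*}
\Pr\!\left[A_i\ \middle|\ \bigwedge_{j\in S}\overline{A_j}\right]\ \le\ x\qquad\text{for every }i\in[t]\text{ and every }S\subseteq[t]\setminus\{i\},
\end{equation*}
where for $S=\varnothing$ the left-hand side is read as $\Pr[A_i]$. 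The base case $S=\varnothing$ amounts to $\Pr[A_i]\le q\le x(1-x)^\Delta$, which is exactly (a form of) the numerical hypothesis; the precise relationship to ``$eq\Delta<1$'' is addressed at the end.

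For the inductive step, let $S\neq\varnothing$ and split $S=S_1\cup S_2$ with $S_1=\{j\in S:(i,j)\in A(G)\}$ and $S_2=S\setminus S_1$, so $|S_1|\le\Delta$ and $A_i$ is mutually independent of $\{A_j:j\in S_2\}$. Writing the target quantity as a ratio,
\begin{equation*}
\Pr\!\left[A_i\ \middle|\ \bigwedge_{j\in S}\overline{A_j}\right]=\frac{\Pr\!\left[A_i\wedge\bigwedge_{j\in S_1}\overline{A_j}\ \middle|\ \bigwedge_{j\in S_2}\overline{A_j}\right]}{\Pr\!\left[\bigwedge_{j\in S_1}\overline{A_j}\ \middle|\ \bigwedge_{j\in S_2}\overline{A_j}\right]},
\end{equation*}
I would bound the numerator above by $\Pr[A_i\mid\bigwedge_{j\in S_2}\overline{A_j}]=\Pr[A_i]\le x(1-x)^\Delta$, using independence from $S_2$, and bound the denominator below by expanding it as a telescoping product over an enumeration $S_1=\{j_1,\dots,j_r\}$ and applying the induction hypothesis to each factor $\Pr[\overline{A_{j_k}}\mid\bigwedge_{\ell<k}\overline{A_{j_\ell}}\wedge\bigwedge_{j\in S_2}\overline{A_j}]\ge 1-x$; every conditioning set appearing here is a proper subset of $S$, so the hypothesis applies, and since $r\le\Delta$ the denominator is at least $(1-x)^\Delta$. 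Hence the ratio is at most $x$, closing the induction.

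Chaining the claim then yields $\Pr[\bigcap_{i=1}^t\overline{A_i}]=\prod_{i=1}^t\big(1-\Pr[A_i\mid\bigwedge_{\ell<i}\overline{A_\ell}]\big)\ge(1-x)^t>0$. The one genuinely delicate point, and the step I expect to require the most care, is choosing the inductive statement correctly: one must induct on the bound for the conditional probabilities $\Pr[A_i\mid\bigwedge_{S}\overline{A_j}]$ rather than on $\Pr[\bigcap_i\overline{A_i}]$ directly, and split the conditioning set into the part $S_1$ that interacts with $A_i$ and the independent part $S_2$; everything else is routine bookkeeping. Concerning the constant: the argument above needs $q\le x(1-x)^\Delta=\tfrac{\Delta^\Delta}{(\Delta+1)^{\Delta+1}}$, which is implied by $eq(\Delta+1)\le 1$ since $(1-x)^\Delta\ge e^{-1}$; recovering the slightly sharper stated threshold $eq\Delta<1$ requires running the same recursion with Shearer's optimal weights~\cite{shearer1985problem}, which changes only the elementary estimates, not the structure of the proof.
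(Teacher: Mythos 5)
The paper does not prove this lemma at all --- it is quoted from the literature, with the citation to Shearer present precisely because the constant in ``$eq\Delta<1$'' is sharper than what the classical argument gives. Your write-up of the classical inductive proof is correct: the choice $x=\tfrac{1}{\Delta+1}$, the split of the conditioning set into $S_1$ (neighbours of $A_i$) and $S_2$ (events $A_i$ is mutually independent of), the bound on the numerator via independence and on the denominator via a telescoping product with the induction hypothesis, and the final chaining are all exactly right, and they establish the conclusion under $q\le x(1-x)^\Delta=\tfrac{\Delta^\Delta}{(\Delta+1)^{\Delta+1}}$, hence under $eq(\Delta+1)\le 1$.

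The one genuine gap is the constant, and your proposed patch for it is not accurate. The symmetric optimum of the Lov\'asz recursion is $\max_x x(1-x)^\Delta=\tfrac{\Delta^\Delta}{(\Delta+1)^{\Delta+1}}$, which is \emph{strictly less} than $\tfrac{1}{e\Delta}$; so no choice of weights $x_i$ in ``the same recursion'' reaches the stated threshold $eq\Delta<1$. Closing that gap really does require Shearer's theorem, whose proof (via the independence polynomial of the dependency graph and its alternating-sign conditions) is structurally different from the inductive argument, not just a re-tuning of the elementary estimates --- which is presumably why the paper cites Shearer alongside the original Erd\H{o}s--Lov\'asz reference rather than proving the lemma. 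That said, in every application of Lemma~\ref{lem:LLL} in this paper the bound $eq\Delta<1$ is verified with enormous slack, so the weaker threshold $eq(\Delta+1)\le 1$ that your argument actually proves would serve just as well throughout; the honest fix is either to weaken the stated condition to $eq(\Delta+1)\le 1$ or to invoke Shearer's bound $\tfrac{(\Delta-1)^{\Delta-1}}{\Delta^\Delta}\ge\tfrac{1}{e\Delta}$ as a black box rather than claiming it follows from the same induction.
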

For the proof of Theorem~\ref{thm: main theorem}, it will be essential to use a digraph extension of the fact that every undirected graph admits a $2$-coloring of the vertices such that for each vertex at least half its neighbors have a different color from itself. As mentioned earlier, one cannot always reduce digraphs with large minimum out-degree to bipartite digraphs while maintaining large minimum out-degree. However, it is possible when we allow ourselves one more color, as was first established by Alon~\cite{noga}.
\begin{theorem}[\cite{noga}]\label{thm: majority coloring}
    Let $D$ be a digraph. Then there exists a $3$-coloring of $V(D)$ such that for each vertex $v$, at least a third of its out-neighbors have a different color from $v$. 
\end{theorem}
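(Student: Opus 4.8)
The plan is to reduce the statement to finite digraphs by a compactness argument, and then prove the finite case by induction on the number of vertices, using the strongly-connected decomposition to strip off the ``acyclic part'' of the digraph, so that the real work is confined to strongly connected digraphs.

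\textbf{Reduction to finite digraphs.} Recall that throughout we may assume $D$ has finite (indeed bounded) out-degree. The space of $3$-colourings $\{1,2,3\}^{V(D)}$ is compact in the product topology, and for each vertex $v$ the set $G_v$ of colourings under which $v$ has at least $d^+(v)/3$ out-neighbours of a colour distinct from its own is clopen, since membership in $G_v$ depends only on the colours of the finite set $\{v\}\cup N^+(v)$. Thus $\bigcap_{v}G_v\neq\emptyset$ will follow, by compactness, once we show that every finite subfamily has non-empty intersection; and for a finite set $S\subseteq V(D)$, applying the (yet to be proved) finite case to the finite digraph $D[S\cup N^+(S)]$ and extending the obtained colouring arbitrarily to $V(D)$ yields a colouring lying in $G_v$ for all $v\in S$. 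Hence it suffices to treat finite $D$.

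\textbf{Reduction to the strongly connected case.} For finite $D$ we argue by induction on $|V(D)|$, proving the following slightly stronger statement: for every digraph $C$ and every choice of non-negative integers $p^v_1,p^v_2,p^v_3$ for $v\in V(C)$ (``phantom out-neighbours''), there is a $3$-colouring $c$ of $C$ with $|N^+_C(v)\cap c^{-1}(c(v))|+p^v_{c(v)}\le \tfrac23\bigl(d^+_C(v)+p^v_1+p^v_2+p^v_3\bigr)$ for all $v$. The original claim is the case $p^v_i\equiv 0$. If $C$ is a single vertex, there are no internal arcs and we simply pick $c(v)$ to be a colour minimising $p^v_{c(v)}$. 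If $C$ is not strongly connected, its condensation is a non-trivial DAG on the strongly connected components of $C$; process these components in reverse topological order, so that when a component $C'$ is coloured all out-neighbours of its vertices lie in $C'$ or in already-coloured components, and apply the inductive hypothesis to $C'$ with phantom counts increased by the numbers of already-coloured out-neighbours of each colour. This leaves exactly the case in which $C$ is strongly connected with at least two vertices, which is the heart of the matter.

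\textbf{The main obstacle.} The natural approach to a strongly connected $C$ (with phantoms) is local search: start from a colouring minimising a suitable (weighted) count of monochromatic arcs, and repeatedly recolour any vertex that sees more than two thirds of its out-neighbours in its own colour class to one of the two other colours --- note that, at a violated vertex, each of the other two colours occurs on fewer than a third of the out-neighbours, so the recoloured vertex becomes satisfied with room to spare. The difficulty is precisely the feature that separates digraphs from graphs: recolouring $v$ fixes $v$ but may simultaneously create violations at many in-neighbours of $v$, and in-degrees are unbounded even when out-degrees are bounded, so the resulting cascade cannot be tamed by a naive monovariant. (That some care is unavoidable can already be seen at the level of the starting colouring: merely minimising the number of monochromatic arcs does not suffice, since in a transitive tournament split into three contiguous colour classes the first vertex of the ``top'' class has \emph{all} of its out-neighbours in its own class while the colouring is nonetheless arc-minimising.) The genuinely new input --- a potential function that damps the contribution of in-arcs, or an argument that routes the cascade of recolourings around a directed cycle using the strong connectivity of $C$, or a hybrid of the probabilistic method (the Lov\'asz Local Lemma handles vertices of large out-degree) with a direct combinatorial treatment of the bounded number of small-out-degree constraints --- is where Alon's argument does the real work, and reconstructing it is the crux of the proof.
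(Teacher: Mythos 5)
The paper offers no proof of this statement---it is imported as a black box from Alon's work---so there is no in-paper argument to measure you against; the only question is whether your write-up proves the theorem, and it does not. Your two reductions are correct: the compactness step is sound because each constraint $G_v$ depends only on the colours of the finite set $\{v\}\cup N^+(v)$ and because $N^+_{D[S\cup N^+(S)]}(v)=N^+_D(v)$ for $v\in S$, and the ``phantom out-neighbour'' strengthening is set up exactly so that the one-vertex base case and the condensation step go through. But these are the routine parts. The entire content of the theorem is the remaining case---a strongly connected digraph on at least two vertices with arbitrary phantom weights---and your final paragraph, rather than proving it, explains why the obvious local-search/arc-minimisation argument fails (recolouring a vertex can create violations at unboundedly many in-neighbours, since in-degrees are not bounded) and then defers to ``where Alon's argument does the real work.'' Naming the obstacle is not the same as overcoming it: as submitted, this is a correct reduction of the problem to its hard core together with an acknowledgement that the hard core is unsolved.

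Concretely, the statement you still owe is: for every strongly connected digraph $C$ with $|V(C)|\ge 2$ and all non-negative weights $p^v_1,p^v_2,p^v_3$, there is a $3$-colouring $c$ with $|N^+_C(v)\cap c^{-1}(c(v))|+p^v_{c(v)}\le \tfrac23\bigl(d^+_C(v)+p^v_1+p^v_2+p^v_3\bigr)$ for every $v$. Note that this weighted version is itself unverified at this point---it does not follow formally from the unweighted statement, so even if you locate a proof of the strongly connected case you must check that it tolerates the phantoms, or else redesign the induction. Until that case is supplied (or the result is simply cited, as the paper does), the proof is incomplete.
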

Kreutzer, Oum, Seymour, van der Zypen and Wood \cite{kreutzer} conjectured that the third in the above statement can be improved to a half. Besides the usual Chernoff bound, we also use the following version.
\begin{lemma}[\cite{chernoff}]\label{lem: chernoff}
    Let $X_1,\ldots,X_n$ be independent random variables taking values in $\{0,1\}$ and let $X=\sum_{i=1}^n X_i$. For $t\geq(2e-1)\mathbb{E}[X]$, we have
    $$
    \Pr[X>t]<2^{-t}.
    $$
\end{lemma}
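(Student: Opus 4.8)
The plan is to use the standard exponential-moment (Chernoff) method, taking care to keep the lower-order term in the exponent, which is precisely what makes the sharp constant $2e-1$ work.

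First I would dispose of the degenerate case separately: if $\mathbb{E}[X]=0$ then each $X_i$ is identically $0$, so $X\equiv 0$ and $\Pr[X>t]=0<2^{-t}$; hence from now on we may assume $\mu:=\mathbb{E}[X]>0$, and then the hypothesis gives $t\ge(2e-1)\mu>0$. Write $p_i:=\Pr[X_i=1]$, so $\mu=\sum_i p_i$. For any $\lambda>0$, Markov's inequality applied to the nonnegative variable $e^{\lambda X}$, together with independence of the $X_i$ and the bound $1+x\le e^x$, yields
$$\Pr[X>t]\le e^{-\lambda t}\,\mathbb{E}\bigl[e^{\lambda X}\bigr]=e^{-\lambda t}\prod_i\bigl(1+p_i(e^\lambda-1)\bigr)\le e^{-\lambda t}\prod_i e^{p_i(e^\lambda-1)}=\exp\!\bigl((e^\lambda-1)\mu-\lambda t\bigr).$$
The crucial step is to choose $\lambda=\ln(t/\mu)>0$, which is legitimate since $t/\mu\ge 2e-1>1$. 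Writing $r:=t/\mu\ge 2e-1$, this substitution turns the exponent into $(r-1)\mu-t\ln r=t-\mu-t\ln r=-t\bigl(\ln r+\tfrac1r-1\bigr)$, so that $\Pr[X>t]\le \exp\!\bigl(-t(\ln r+\tfrac1r-1)\bigr)$.

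It then remains to establish the elementary inequality $\ln r+\tfrac1r-1>\ln 2$ for all $r\ge 2e-1$; since $t>0$, this at once gives $\Pr[X>t]<e^{-t\ln 2}=2^{-t}$. To prove it, set $\psi(r):=\ln r+\tfrac1r-1-\ln 2$. Then $\psi'(r)=\tfrac1r-\tfrac1{r^2}=\tfrac{r-1}{r^2}>0$ for $r>1$, so $\psi$ is increasing on $[2e-1,\infty)$, and at the left endpoint $\psi(2e-1)=\ln(2e-1)+\tfrac1{2e-1}-\ln(2e)=\tfrac1{2e-1}-\ln\!\bigl(1+\tfrac1{2e-1}\bigr)>0$ by the standard inequality $\ln(1+x)<x$ for $x>0$. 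Hence $\psi(r)\ge\psi(2e-1)>0$ for every $r\ge 2e-1$, which finishes the proof.

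The one point worth flagging is that the familiar crude form of the Chernoff bound, $\Pr[X>t]\le(e\mu/t)^t$, is not by itself strong enough here, because $e/(2e-1)>\tfrac12$; one genuinely has to retain the additive $-\mu$ term in the exponent (equivalently, the $+\tfrac1r$ inside $\psi$), after which $\ln(1+x)<x$ closes the gap with a little room to spare. Beyond that, there is no real obstacle.
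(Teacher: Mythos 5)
Your proof is correct. The paper does not prove this lemma but only cites it, so there is nothing to compare against except the standard literature argument, which is exactly what you give: Markov applied to $e^{\lambda X}$, the bound $\mathbb{E}[e^{\lambda X}]\le \exp((e^\lambda-1)\mu)$, and the optimal choice $\lambda=\ln(t/\mu)$. You correctly identify the one delicate point, namely that the weakened form $(e\mu/t)^t$ does not suffice at the threshold $r=2e-1$ and that the additive $-\mu$ term (the $1/r$ inside $\psi$) must be retained, and your verification that $\psi(2e-1)=\tfrac{1}{2e-1}-\ln\bigl(1+\tfrac{1}{2e-1}\bigr)>0$ via $\ln(1+x)<x$ is exactly the right closing step; the degenerate case $\mu=0$ is also handled.
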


\section{Constructions}
    In this section we prove Theorems~\ref{thm:complete directed pair} and \ref{thm:treeavoidable}. Both of these rely on $d$-out-arborescences. A $d$-out-arborescence of height $\ell$ is a rooted tree of height $\ell$ where each arc is directed away from the root and every vertex not at height $\ell$ has exactly $d$ children.
    \begin{proof}[Proof of Theorem~\ref{thm:complete directed pair}]
        Let us fix a one-directed complete bipartite graph $(A,B)$ and $k\geq |B|$. We show that for every $d\geq k$ there exists a digraph $D$ with minimum out-degree $d$ such that every $D'\subseteq D$ with minimum out-degree $k$ contains a copy of $(A,B)$. Let $T_1,\ldots T_d$ be $d$ disjoint $d$-out-arborescences of height $\ell=\left\lceil\log_k\left(|A|\binom{d}{k}\right)\right\rceil$. Let $D$ be the digraph consisting of these arborescences where we add an out-edge from every leaf to every root. Note that $D$ has minimum out-degree $d$. Let $D'$ be any subgraph of $D$ with minimum out-degree $k$. As every directed cycle in $D$ contains a root, $D'$ contains a root $r$ of $T_i$ for some $i$. Then, $D'$ contains at least $k^\ell\geq|A|\binom{d}{k}$ leaves of $T_i$. At least $|A|$ of them connect to the same $k$ roots in $D'$, yielding a copy of $(A,B)$.
    \end{proof}
    Before we prove Theorem~\ref{thm:treeavoidable}, we need a bit more notation. A digraph is \emph{rooted} if there is a special vertex $r$ (the \emph{root}) such that every vertex can be reached by a directed path starting from $r$. Given a rooted directed graph $D$, the \emph{$i$\textsuperscript{th} layer} $L_i(D)$ is the set of vertices at distance $i$ from the root. $D$ is \emph{layered} if all arcs of $D$ are of the form $(u,v)$ with $u\in L_i(D), v\in L_{i+1}(D)$ for some $i$, and, in addition, all vertices with out-degree zero belong to the same layer. We view the layer containing the out-degree-zero vertices as the ``bottom'' layer, and the zeroth layer $\{r\}$ as the ``top'' layer. The following lemma is the core of the proof of Theorem~\ref{thm:treeavoidable}.
    \begin{lemma}\label{lem:digraphtree}
        For all positive integers $k\geq 2$, $d$ and $t$, there exists a layered rooted digraph $D$ with out-degree $d$ at every vertex not in the bottom layer such that the following property holds. Whenever we take a subdigraph $H$ with out-degree at least $k$ at each vertex (apart from those in the bottom layer of $D$) such that the root is contained in $H$, then $H$ has an induced subdigraph (with non-empty vertex set) contained in the bottom $t$ layers in which all vertices have both in- and out-degree at least $k$, except the vertices in the first and $t$\textsuperscript{th} layer from the bottom, which have out- and in-degree zero, respectively.
    \end{lemma}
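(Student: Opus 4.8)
The plan is to prove the lemma by induction on $t$, building $D=D_t$ by stacking a new ``compression phase'' on top of a (padded) copy of $D_{t-1}$. Two ingredients recur throughout. First, a \emph{reservoir}: if the top $L$ layers of $D$ form a $d$-out-arborescence, then since $H$ has minimum out-degree $\ge k$ and contains the root, a greedy descent builds a $k$-out-arborescence inside $H$, and hence produces at least $k^{L}$ distinct vertices of $H$ in layer $L_L$, each still of out-degree $\ge k$ in $H$; taking $L$ large makes this set as large as we like. Second, a \emph{compression step}: if such a reservoir $W$ sits one layer above a layer of size exactly $d$ that is joined to it by the complete bipartite digraph, then each vertex of $W$ has its $H$-out-neighbourhood among a fixed $d$-set, so there are at most $2^{d}$ possible out-neighbourhoods; by pigeonhole some fixed set $R$ with $k\le|R|\le d$ is the $H$-out-neighbourhood of at least $|W|/2^{d}$ vertices of $W$, yielding a complete bipartite digraph, inside $H$, from a still-huge set $M\subseteq W$ onto $R$.

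The base case is $t=2$ (the case $t=1$ is vacuous). Here $D$ is a $d$-out-arborescence of large height $L$ whose leaf layer is joined by the complete bipartite digraph to one further layer of size $d$. The reservoir and compression steps directly give a huge set $M$ of arborescence leaves and a set $R$ in the last layer with $|R|\ge k$ and a complete bipartite join $M\to R$ in $H$; any $k$ of the leaves together with $R$ form the desired depth-$2$ induced subdigraph (leaves: in-degree $0$, out-degree $|R|\ge k$; vertices of $R$: out-degree $0$, in-degree $\ge k$). Note this gives a slightly stronger conclusion: the top layer of the depth-$2$ structure is complete-bipartitely fed, inside $H$, by a set that can be taken arbitrarily large. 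It is this strengthened form of the conclusion — ``the $t$-th-from-bottom layer of the produced thick path is complete-bipartitely $H$-fed by an arbitrarily large set'' — that I would carry through the induction, and correspondingly the inductive hypothesis should be stated not for a single root but for \emph{any sufficiently large set of out-degree-$\ge k$ vertices at the top layer} playing the role of the root.

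For the inductive step I would take $D=D_t$ to consist of: a long $d$-out-arborescence; a complete bipartite join onto a layer $C$ of size $d$; and, hanging below $C$, a copy of $D_{t-1}$ whose top part has itself been padded into a long $d$-out-arborescence and whose top layer is identified with (a blow-up of) $C$, so that the bottom $t-1$ layers of $D_t$ are exactly those of the copy — all heights and padding amounts chosen large enough, as explicit functions of $d,k,t$, that every reservoir produced downstream stays large enough for the next compression or inductive step. Given admissible $H$: a reservoir-plus-compression step gives a huge set $M_1$ of arborescence leaves and a set $R\subseteq C$ with $k\le|R|\le d$ complete-bipartitely joined from $M_1$ inside $H$. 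Since $R$ is a set of at least $k$ out-degree-$\ge k$ vertices at the top of the copy of $D_{t-1}$, the strengthened induction hypothesis produces a depth-$(t-1)$ thick path in the bottom $t-1$ layers whose top layer is complete-bipartitely $H$-fed by a subset $R'\subseteq R$ with $|R'|\ge k$. Adjoining (the vertices realising) $R'$ on top and performing a routine cleaning step — repeatedly discard vertices violating the required in-/out-degree bounds, which cannot empty the structure since the explicitly exhibited thick path survives it — yields the depth-$t$ induced subdigraph, which lies in the bottom $t$ layers and has exactly the stated degrees, completing the induction.

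The main obstacle is the coordination at the seam: the top layer of the inductively produced depth-$(t-1)$ thick path has in-degree $0$ by itself, and one must arrange that in $D_t$ it acquires in-degree $\ge k$ from the layer above. This is exactly why the induction must be run with the strengthened conclusion (the top layer being complete-bipartitely $H$-fed) and with the root replaced by an arbitrary large set of out-degree-$\ge k$ top vertices, and why $D_{t-1}$ must be padded — so that its top part still forces a fresh reservoir and so that the compression layer $C$ can be plugged in as its top layer; keeping the choices of arborescence heights, padding amounts, and reservoir-size thresholds mutually consistent across the recursion (so that, e.g., each compression step still has $|W|\gg 2^{d}$) is the bulk of the bookkeeping. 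A secondary point, handled by the cleaning step and the use of complete bipartite joins between consecutive phases, is to confirm that the final object is genuinely an \emph{induced} subdigraph of $H$ with nonempty vertex set.
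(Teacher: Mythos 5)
Your proof has the right high-level ingredients (an arborescence "reservoir" that forces $\geq k^L$ vertices of $H$ at a fixed depth, followed by a $2^d$-way pigeonhole "compression", followed by induction), but the inductive engine as you have set it up does not run. The core problem is your strengthened induction hypothesis. You want the top ($(t{-}1)$\textsuperscript{st}-from-bottom) layer of the produced thick structure to be \emph{complete-bipartitely $H$-fed} by a large set, and in the inductive step you identify that feeding set as a subset $R'\subseteq R$, where $R\subseteq C$ is the compression layer sitting immediately below the top arborescence of $D_t$. But $C$ is by construction the \emph{top} of the embedded copy of $D_{t-1}$, whereas the thick structure lives in the \emph{bottom} $t-1$ layers of $D_{t-1}$; between them you have deliberately inserted a long padding arborescence. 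There are simply no arcs from $C$ to the $(t{-}1)$\textsuperscript{st}-from-bottom layer, so $R'$ cannot feed it, and "adjoining $R'$ on top" has no geometric meaning. The strengthening also fails already in your base case $t=2$: the top layer of your depth-$2$ structure consists of leaves of a $d$-out-arborescence, and each such leaf has a \emph{unique} in-neighbour (its tree parent), so that layer cannot be complete-bipartitely fed by any set of size $>1$. (As a smaller point, $t=1$ is not vacuous — the lemma demands a non-empty induced subdigraph, which a single vertex supplies.)

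The paper's proof avoids all of this by adding the new layer $B$ of size $d$ at the \emph{bottom} rather than the top. It takes a $d$-out-arborescence of modest height $\ell=\lceil\log_k(2^dk)\rceil$, attaches a copy $F_w$ of the depth-$(t{-}1)$ gadget at every leaf $w$, and joins the bottom layer of every $F_w$ completely to $B$. Given $H$, the reservoir gives $\geq k^{\ell}\geq 2^dk$ surviving leaves; for each surviving $w$, the plain (unstrengthened) induction hypothesis applied inside $F_w$ gives a thick structure $H'_w$, and one pigeonholes on the subset $B_w\subseteq B$ of bottom vertices that $H'_w$ actually reaches. This has at most $2^d$ values, so $k$ of the leaves agree on $B_w=B^*$; then $H[\bigcup_i V(H'_{w_i})\cup B^*]$ works, because each bottom vertex of each $H'_{w_i}$ has all of its $\geq k$ $H$-out-neighbours inside $B^*$, and each $b\in B^*$ receives at least one in-arc from each of the $k$ disjoint structures. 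The pigeonhole thus runs over sets \emph{reached below} the inductive structure rather than sets \emph{feeding from above}, which is exactly what removes the need for any strengthening of the hypothesis — and is the idea missing from your write-up.
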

    \begin{proof}
        We proceed by induction on $t$. The case $t=1$ is trivial, as we can take $D$ to be a single vertex. Now, assume $t\geq 2$, and for $t-1$ we have already constructed such a digraph $F$.
	
        Take a $d$-out-arborescence $T_0$ of height $\ell=\left\lceil \log_k\left(2^d k\right) \right\rceil$, rooted at some vertex $r$. Let $L$ denote the set of leaves. For each $w\in L$, attach a copy $F_w$ of $F$ rooted at the vertex $w$. Add an additional set $B$ of $d$ vertices, and for all $w\in L$, add an edge from each vertex in the bottom layer of $F_w$ to each vertex in $B$. Let the resulting digraph be $D$. It is easy to see that it is rooted (at $r$), layered, and every vertex not in the bottom layer $B$ has out-degree $d$.
	
        Now assume that $H$ is a subdigraph of $D$ with $r\in V(H)$ such that each vertex in $V(H)\setminus B$ has out-degree at least $k$ in $H$. It follows that $|L\cap V(H)|\geq k^\ell$. For each $w\in L\cap V(H)$, by induction, we know that $H[V(F_w)]$ has an induced subdigraph $H'_w$ (with non-empty vertex set) contained in the bottom $t-1$ layers of $F_w$ in which all vertices have both in- and out-degree at least $k$, except the vertices in the first and $(t-1)$\textsuperscript{st} layer from the bottom, which have out- and in-degree zero, respectively.
	
        For each $w\in L\cap V(H)$, let $B_w$ denote the set of vertices in $B$ which have an in-neighbour in $V(H'_w)$ in $H$. Since $|L\cap V(H)|\geq k^\ell\geq 2^{d}k$, there exist $w_1,\dots,w_k\in L\cap V(H)$ distinct such that $B_{w_i}$ is the same set $B^*\subseteq B$ for all $i\in [k]$. Then $H[\bigcup_{i=1}^k V(H_{w_i})\cup B^*]$ satisfies the conditions.
\end{proof}
\begin{proof}[Proof of Theorem~\ref{thm:treeavoidable}]
        Given a directed tree $T$, we want to show that there is a positive integer $k$ such that for all $d$, there is a digraph with minimum out-degree $d$ such that all subgraphs with minimum out-degree $k$ contain $T$. Let $k=|V(T)|$, and given $d$, apply Lemma~\ref{lem:digraphtree} with parameters $k$, $d$ and $t=2|V(T)|$, to obtain a digraph $D$. Take $d$ disjoint copies $D_1,\dots,D_d$ of $D$, and take an edge from every bottom vertex to every root of some $D_i$. Then the out-degree of every vertex is $d$, and every subgraph $H$ with minimum out-degree $k$ contains the root of $D_i$ for some $i$. Using Lemma~\ref{lem:digraphtree}, the bottom $t$ layers of $H[V(D_i)\cap V(H)]$ contain an induced subgraph $H'$ where all vertices have both in- and out-degree at least $k$, apart from those in the first and the $t$\textsuperscript{th} layer of $D_i$ from the bottom. Then, we can greedily embed $T$ into this subgraph $H'$, starting with embedding an arbitrary vertex into the $|V(T)|$\textsuperscript{th} layer from the bottom. 
\end{proof}

\section{Proof of Theorem~\ref{thm: main theorem}}
	\begin{figure}[b]
		\centering
		\begin{subfigure}[h]{0.24\textwidth}
			\centering
			\begin{tikzpicture}[scale=2,
    node_style/.style={circle, draw, fill=black, minimum size=4pt, inner sep=0pt},
    edge_style/.style={->,thick,>={Stealth[length=2mm, width=2mm]}}
]
    \node[node_style] (v1) at (90:.5cm)  {};
    \node[node_style] (v2) at (18:.5cm)  {};
    \node[node_style] (v3) at (-54:.5cm) {};
    \node[node_style] (v4) at (-126:.5cm){};
    \node[node_style] (v5) at (-198:.5cm){};

    \draw[edge_style] (v1) -> (v2);
    \draw[edge_style] (v2) -> (v3);
    \draw[edge_style] (v3) -> (v4);
    \draw[edge_style] (v4) -> (v5);
    \draw[edge_style] (v5) -> (v1);
\end{tikzpicture}
			\captionsetup{justification=centering}
			\caption*{$C_5^{(1)}$}
		\end{subfigure}
		\begin{subfigure}[h]{0.24\textwidth}
			\centering
\begin{tikzpicture}[scale=2,
    node_style/.style={circle, draw, fill=black, minimum size=4pt, inner sep=0pt},
    edge_style/.style={->,thick,>={Stealth[length=2mm, width=2mm]}}
]
    \node[node_style] (v1) at (90:.5cm)  {};
    \node[node_style] (v2) at (18:.5cm)  {};
    \node[node_style] (v3) at (-54:.5cm) {};
    \node[node_style] (v4) at (-126:.5cm){};
    \node[node_style] (v5) at (-198:.5cm){};

    \draw[edge_style] (v1) -> (v2);
    \draw[edge_style] (v2) -> (v3);
    \draw[edge_style] (v3) -> (v4);
    \draw[edge_style] (v4) -> (v5);
    \draw[edge_style] (v1) -> (v5);
\end{tikzpicture}			\captionsetup{justification=centering}
			\caption*{$C_5^{(2)}$}
		\end{subfigure}		\begin{subfigure}[h]{0.24\textwidth}
			\centering
\begin{tikzpicture}[scale=2,
    node_style/.style={circle, draw, fill=black, minimum size=4pt, inner sep=0pt},
    edge_style/.style={->,thick,>={Stealth[length=2mm, width=2mm]}}
]
    \node[node_style] (v1) at (90:.5cm)  {};
    \node[node_style] (v2) at (18:.5cm)  {};
    \node[node_style] (v3) at (-54:.5cm) {};
    \node[node_style] (v4) at (-126:.5cm){};
    \node[node_style] (v5) at (-198:.5cm){};

    \draw[edge_style] (v1) -> (v2);
    \draw[edge_style] (v2) -> (v3);
    \draw[edge_style] (v3) -> (v4);
    \draw[edge_style] (v5) -> (v4);
    \draw[edge_style] (v1) -> (v5);
\end{tikzpicture}
			\captionsetup{justification=centering}
			\caption*{$C_5^{(3)}$}
		\end{subfigure}		\begin{subfigure}[h]{0.24\textwidth}
			\centering
\begin{tikzpicture}[scale=2,
    node_style/.style={circle, draw, fill=black, minimum size=4pt, inner sep=0pt},
    edge_style/.style={->,thick,>={Stealth[length=2mm, width=2mm]}}
]
    \node[node_style] (v1) at (90:.5cm)  {};
    \node[node_style] (v2) at (18:.5cm)  {};
    \node[node_style] (v3) at (-54:.5cm) {};
    \node[node_style] (v4) at (-126:.5cm){};
    \node[node_style] (v5) at (-198:.5cm){};

    \draw[edge_style] (v1) -> (v2);
    \draw[edge_style] (v2) -> (v3);
    \draw[edge_style] (v4) -> (v3);
    \draw[edge_style] (v4) -> (v5);
    \draw[edge_style] (v1) -> (v5);
\end{tikzpicture}
			\captionsetup{justification=centering}
			\caption*{$C_5^{(4)}$}
		\end{subfigure}
                \caption{The four different orientations of $C_5$}
        \label{Fig: C5}

	\end{figure}
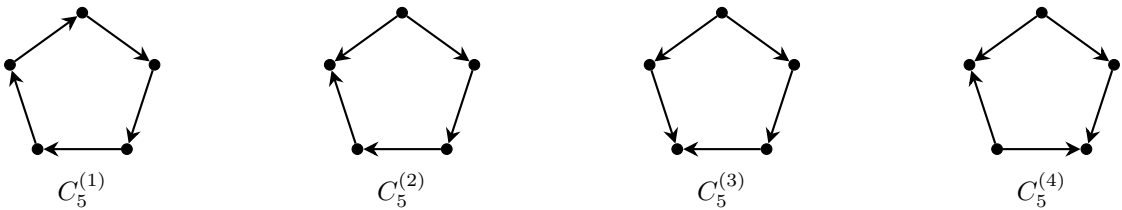
In this section we show that every orientation of $C_3$ and $C_5$ is avoidable. Let $C_3^{(1)}$ be the directed and $C_3^{(2)}$ the transitive triangle. Figure~\ref{Fig: C5} shows the $4$ different orientations of $C_5$. Theorem~\ref{thm: majority coloring} allows us to assume that $D$ is $3$-partite, where we say that a digraph $D$ is $3$-partite with partition $(A,B,C)$ if the underlying undirected graph is $3$-partite with parts $A,B$, and $C$. The following definition (and the lemma thereafter) asserts even more control on the neighborhoods of each vertex.
\begin{definition}[$s$-typed]
    Given a non-negative integer $s$ and a $3$-partite digraph $D$ with partition $(A,B,C)$, we say that $D$ is $s$-typed, if for every $v\in V(D)$ there exists a word $t\in\{A,B,C\}^s$ such that $N^+_i(v)\subseteq t(i)$ for every $1\le i \le s$. We call $t$ the $s$-type of $v$ (or just type if $s$ is clear from the context). Furthermore, we say that the partition $(A,B,C)$ witnesses that $D$ is $s$-typed.
\end{definition}
\begin{lemma}\label{lem: typing}
    Every $3$-partite digraph $D$ with minimum out-degree $d$ contains an $s$-typed subgraph $D'$ with minimum out-degree at least $d/3^{s}$.
\end{lemma}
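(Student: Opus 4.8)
The plan is to prove this by building up the typing \emph{one coordinate at a time}, through an iterative refinement of the digraph that costs only a factor $3$ in the minimum out-degree per coordinate. Concretely, I would construct a nested sequence of spanning subdigraphs $D = D_0 \supseteq D_1 \supseteq \cdots \supseteq D_s$ such that $D_j$ has minimum out-degree at least $d/3^j$ and, for every vertex $v$ and every $1 \le i \le j$, the set $N_i^+(v)$ computed inside $D_j$ is contained in a single part of $(A,B,C)$. Granting this, $D' := D_s$ is as desired: the $j=s$ condition says exactly that each $v$ has an $s$-type (for each $i$ let $t(i)$ be the part containing $N_i^+(v)$, chosen arbitrarily if that set is empty), witnessed by $(A,B,C)$, and the out-degree bound is precisely $d/3^s$. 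The base case $D_0 = D$ is immediate, the degree condition being the hypothesis and the monochromaticity condition being vacuous.

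The inductive step from $D_j$ to $D_{j+1}$ is the core. Using the invariant, for each vertex $u$ the set $N_j^+(u)$ (in $D_j$) lies in a single part; fix one such part and call it $\gamma(u)$ --- crucially, this is a function of $u$ alone, not of any vertex pointing to $u$ (for $j=0$ we have $N_0^+(u)=\{u\}$, so $\gamma$ is well defined even at the first step; if $N_j^+(u)$ is empty, any choice works). Now I would invoke the identity $N_{j+1}^+(v) = \bigcup_{u \in N_1^+(v)} N_j^+(u)$: if all out-neighbours $u$ of $v$ shared the same value $\gamma(u)$, then $N_{j+1}^+(v)$ would be monochromatic. So, at each vertex $v$ simultaneously, I keep only the out-edges to out-neighbours lying in the most popular $\gamma$-class among $N_1^+(v)$; this retains at least $\frac{1}{3}\cdot\frac{d}{3^j} = \frac{d}{3^{j+1}}$ out-edges, and defines $D_{j+1}$.

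It then remains to check the invariant for $D_{j+1}$. Deleting out-edges can only shrink every $N_i^+(v)$, so the monochromaticity conditions for $i \le j$ are inherited from $D_j$. For $i = j+1$: every out-neighbour $u$ of $v$ in $D_{j+1}$ has the same colour $\gamma(u) = P_v$, and $N_j^{+,D_{j+1}}(u) \subseteq N_j^{+,D_j}(u) \subseteq \gamma(u) = P_v$, so $N_{j+1}^{+,D_{j+1}}(v) \subseteq P_v$, a single part.

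The one pitfall to watch for --- and the reason the argument must be iterative rather than one-shot --- is the tempting but wasteful move of homogenizing all $s$ type-coordinates at once by pigeonholing over the $3^s$ possible $s$-types among a vertex's out-neighbours, which would cost a factor $3^s$ at a single vertex. Peeling off one level at a time, and in particular defining $\gamma$ as a vertex-invariant (the colour of $N_j^+(u)$) rather than attempting to match full types across out-neighbours, is exactly what keeps the loss to a single factor of $3$ per step; verifying that this local refinement does not disturb the already-fixed lower levels is the only part requiring a moment of care.
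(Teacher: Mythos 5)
Your proof is correct and is essentially the paper's proof: both proceed by induction on the type length, at each step picking for every vertex $v$ the majority class among its out-neighbours and discarding the rest, losing only a factor of~$3$ in out-degree per step. The only cosmetic difference is that the paper phrases the pigeonholing over the out-neighbours' full $(s-1)$-types (observing they can differ only in the last coordinate), whereas you phrase it directly over $\gamma(u)$, the part containing $N_{j}^{+}(u)$, which is precisely that last coordinate.
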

\begin{proof}
    We proceed by induction on $s$. Note that every $3$-partite digraph is $0$-typed. Suppose $s\geq 1$ and the statement holds for $s-1$. Then, it follows from the induction hypothesis that there exists $(s-1)$-typed $F\subseteq D$ with minimum out-degree at least $d/3^{s-1}$. Let $(A,B,C)$ be a partition of $V(F)$ which witnesses that $F$ is $(s-1)$-typed.
    
    For each $v\in V(F)$, denote by $t_v$ the $(s-1)$-type of $v$. (If $s=1$, then instead we take $t_v$ to be the set from $\{A,B,C\}$ containing $v$.) Since $F$ is $(s-1)$-typed, it follows that $t_v(i)=t_u(i-1)$ for every $u\in N_F^+(v)$ and $2\leq i\leq s-1$. Note that this determines $t_u$ besides $t_u(s-1)$. Therefore, at least a third of the vertices in $N_F^+(v)$ have the same type $t_v'$. Let $D'\subseteq F$ be the digraph obtained by keeping only the out-edges between $v$ and the vertices with type $t_v'$ for every $v\in V(F)$. Then, $\delta^+(D')\geq d/3^{s}$, and for all $v\in V(D')$, $N^+(v)\subseteq t_v(1)$ and $N_i^+(v)\subseteq t_v'(i-1)$ for $2\leq i\leq s$. (If $s=1$, then the latter conditions are replaced by $N^+(v)\subseteq t'_v$.)
\end{proof}
Though we claimed earlier that Theorem~\ref{thm: majority coloring} is the directed extension of the reduction to the bipartite case in undirected graphs, it really is the combination of Theorem~\ref{thm: majority coloring} together with Lemma~\ref{lem: typing}. In bipartite graphs, the $i$\textsuperscript{th} neighborhood of a vertex is monochromatic and the color depends only on the parity. While we cannot achieve a property as strong as the parity condition, Lemma~\ref{lem: typing} at least allows us to assume that the $i$\textsuperscript{th} neighborhood is monochromatic for bounded $i$. As the following lemma shows, this is sufficient to show that $C_3^{(2)}$, $C_5^{(3)}$ and $C_5^{(4)}$ are avoidable.
\begin{lemma}\label{lem: easy orientations}
    Let $D$ be a $3$-partite $2$-typed digraph. Then, $D$ does not contain $C_3^{(2)}$, $C_5^{(3)}$ or $C_5^{(4)}$.
\end{lemma}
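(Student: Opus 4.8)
The plan is to derive a contradiction from the existence of a copy of any of the three forbidden orientations by exhibiting two of its vertices that are simultaneously forced into a common part of the $3$-partition and joined by an arc of $D$ --- which is impossible, since each part is an independent set of the underlying graph, so every arc of $D$ runs between two \emph{distinct} parts.

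Fix a partition $(A,B,C)$ witnessing that $D$ is $2$-typed. First I would record the two facts I will use: (i) every arc of $D$ joins vertices in two distinct parts; and (ii) for every vertex $v$, all of $N_1^+(v)$ lies in one part and all of $N_2^+(v)$ lies in one part, so any two out-neighbours of $v$ share a part, and any two vertices reachable from $v$ by a directed walk of length exactly $2$ share a part. The cases $C_3^{(2)}$ and $C_5^{(4)}$ will use only the statement about $N_1^+$ (i.e.\ $1$-typedness), whereas $C_5^{(3)}$ genuinely needs the statement about $N_2^+$.

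Then I would dispose of the three cases one at a time, matching vertex names to the orientations in Figure~\ref{Fig: C5}. For $C_3^{(2)}$, written with arcs $x\to y$, $x\to z$, $y\to z$: the vertices $y,z\in N_1^+(x)$ share a part, contradicting the arc $y\to z$. For $C_5^{(4)}$, written with sources $v_1,v_4$ and arcs $v_1\to v_2$, $v_2\to v_3$, $v_4\to v_3$, $v_4\to v_5$, $v_1\to v_5$: here $v_2,v_5\in N_1^+(v_1)$ share a part and $v_3,v_5\in N_1^+(v_4)$ share a part, and since $v_5$ lies in both, $v_2,v_3,v_5$ all lie in a single part, contradicting the arc $v_2\to v_3$. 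For $C_5^{(3)}$, written with source $v_1$, sink $v_4$ and the two dipaths $v_1 v_2 v_3 v_4$ and $v_1 v_5 v_4$ from $v_1$ to $v_4$: both $v_3$ (via $v_1\to v_2\to v_3$) and $v_4$ (via $v_1\to v_5\to v_4$) lie in $N_2^+(v_1)$, so they share a part, contradicting the arc $v_3\to v_4$.

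Since the argument is this short, I do not expect a serious obstacle; the only care needed is bookkeeping --- aligning vertex labels with the drawn orientations and, for each, choosing the right conflicting pair. What is worth emphasising is that $C_5^{(3)}$ is precisely the case where the first out-neighbourhood does not suffice: a proper $3$-colouring of the underlying $C_5$ can make the two out-neighbours of $v_1$ monochromatic without any clash, so it is essential to pass to \emph{second} out-neighbourhoods, which is exactly the extra control that Lemma~\ref{lem: typing} buys beyond plain majority colouring.
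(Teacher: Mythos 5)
Your proof is correct and follows essentially the same approach as the paper: use monochromaticity of $N_1^+(v)$ to kill $C_3^{(2)}$, monochromaticity of $N_2^+(v)$ (applied to the source) to kill $C_5^{(3)}$, and the shared out-neighbour of the two sources to kill $C_5^{(4)}$. Your side remark that $C_3^{(2)}$ and $C_5^{(4)}$ already die under $1$-typedness, while $C_5^{(3)}$ genuinely needs control over second out-neighbourhoods, is a nice observation but does not change the substance.
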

\begin{proof}
    Let $(A,B,C)$ be a partition of $V(D)$ witnessing that $D$ is $2$-typed. We say that a set is monochromatic if it is a subset of one of $A$, $B$ or $C$ and note that every monochromatic set is independent. Since $D$ is $2$-typed, $N^+(v)$ and $N_2^+(v)$ are monochromatic for every $v\in V(D)$, immediately excluding any copies of $C_3^{(2)}$ and $C_5^{(3)}$ from $D$. Suppose that $v$ and $u$ share an out-neighbor in $D$. Again since $D$ is $2$-typed, it follows that $N^+(v)\cup N^+(u)$ is monochromatic. When applied with $u$ and $v$ as the sources of $C_5^{(4)}$, this implies that $D$ does not contain any copy of $C_5^{(4)}$.
\end{proof}
Together with Theorem~\ref{lem:remove_directed_cycles}, it only remains to show that $C_5^{(2)}$ is avoidable. We start with the following simple observation. We say a digraph $D$ is $k$-degenerate if the underlying undirected graph is $k$-degenerate. 
\begin{observation}\label{obs: degenerate}
    Let $D$ be a (multi-)digraph with $\Delta^+(D)\leq k$. Then, $D$ is $2k$-degenerate.
\end{observation}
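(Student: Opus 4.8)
The plan is to use the standard edge-counting sufficient condition for degeneracy: a (multi)graph is $c$-degenerate provided every subgraph $H$ contains a vertex of degree at most $c$, and to exhibit such a vertex it suffices to show $H$ has at most $\frac{c}{2}|V(H)|$ edges (with multiplicity), since then the sum of degrees over $V(H)$ is at most $c|V(H)|$, forcing a vertex of degree at most $c$. So I will bound the number of arcs in an arbitrary subdigraph of $D$ using the out-degree hypothesis, and then transfer this to the underlying undirected (multi)graph.

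\textbf{Key steps.} First, let $D'$ be an arbitrary subdigraph of $D$. Every arc of $D'$ has a unique tail, and each vertex of $D'$ is the tail of at most $\Delta^+(D')\le\Delta^+(D)\le k$ arcs of $D'$; hence $|A(D')|\le k\,|V(D')|$. Second, pass to the underlying undirected multigraph $G'$ of $D'$: each arc of $D'$ contributes exactly one edge of $G'$ (counted with multiplicity), so $|E(G')|\le |A(D')|\le k\,|V(D')|$. Third, summing vertex degrees in $G'$ gives $\sum_{v\in V(G')} \deg_{G'}(v) = 2|E(G')| \le 2k\,|V(G')|$, so some vertex of $G'$ has degree at most $2k$. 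Since $D'$ (equivalently $G'$) was an arbitrary subgraph, the underlying undirected graph of $D$ is $2k$-degenerate, which by definition means $D$ is $2k$-degenerate.

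\textbf{Main obstacle.} There is essentially none: the only thing to be mildly careful about is that $D$ is a multidigraph, so one should count edges of the underlying undirected graph with multiplicity throughout (parallel arcs, and a pair of oppositely directed arcs between the same two vertices, each contribute to the edge count), but the inequality $|E(G')|\le|A(D')|$ still holds, and collapsing multiplicities if one prefers the underlying simple graph only decreases the edge count, so the conclusion is unaffected.
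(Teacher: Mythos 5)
Your proof is correct. The paper states this as an unproved Observation (treating it as immediate), and your argument — bounding $|A(D')|\le k|V(D')|$ in any subdigraph $D'$ via the out-degree cap, hence average undirected degree at most $2k$, hence a vertex of degree at most $2k$ — is exactly the standard argument the paper leaves implicit.
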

The following lemma in essence proves that $C_5^{(2)}$ is avoidable as we will iteratively apply it to show that no vertex is the source of a copy of $C_5^{(2)}$.
\begin{lemma}\label{lem: C5}
    Let $k\geq 100$ and let $D$ be a $3$-partite digraph with minimum out-degree $k^{20}$. Suppose further that $D$ is $1$-typed with witness $(A,B,C)$ and $D$ contains no copy of $C_3^{(1)}$. Then, $D$ contains a spanning subgraph $D'$ such that each vertex has out-degree $k$ and $D'$ does not contain a copy of $C_5^{(2)}$ with source in $N_1^-(A)$.
\end{lemma}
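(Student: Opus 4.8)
The plan is to first pin down the structure of the copies that must be destroyed, then build $D'$ in two stages: a deterministic ``commitment'' stage that uses up most of the slack between $k^{20}$ and $k$ and already cripples the bad copies, followed by a probabilistic pruning stage that finishes them off.

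\textbf{Structure of the bad copies.} I would first delete out-edges so that $D$ is exactly $k^{20}$-out-regular (this preserves all hypotheses, and at the very end we delete further out-edges to bring out-degrees from $\ge k$ down to exactly $k$, which cannot create new copies of $C_5^{(2)}$). Write a copy of $C_5^{(2)}$ with source $v_1$ as the path $v_1\to a_2\to x_3\to y_4\to a_5$ together with the chord $v_1\to a_5$, so $v_1$ is its unique vertex of out-degree $2$ and $a_5$ its unique vertex of out-degree $0$. If $v_1\in N_1^-(A)$ then, since $D$ is $1$-typed, $N^+(v_1)\subseteq A$, so $a_2,a_5\in A$; walking along the path and using that each out-neighbourhood is monochromatic, $x_3$ and $y_4$ must lie in $B$ and $C$ in one of the two orders (a ``rainbow'' path $A\to B\to C\to A$ or $A\to C\to B\to A$), and in particular $y_4\in N_1^-(A)$ too. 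Moreover all five vertices are distinct: the only coincidences not ruled out by $3$-partiteness or the absence of loops are $a_5=a_2$ and $y_4=v_1$, both of which would create a directed triangle $C_3^{(1)}$ in $D$, and $x_3=v_1$, which gives a non-genuine copy. So the objects to kill are precisely: a pair $v_1,y_4\in N_1^-(A)$ sharing an out-neighbour $a_5\in A$, together with a rainbow directed $3$-path $v_1\to a_2\to x_3\to y_4$ in $D'$.

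\textbf{Construction.} Fix a uniformly random partition $A=A_1\cup\dots\cup A_m$ with $m:=k^{18}$, and have every vertex $v\in N_1^-(A)$ \emph{commit} to a class $A_{c(v)}$ maximising $|N^+(v)\cap A_{c(v)}|$, keeping only the at least $k^{20}/m=k^2\ge k$ out-edges of $v$ landing in $A_{c(v)}$. No probability is needed for this step, and it already does most of the work: if a bad copy survives it, then $a_2,a_5\in A_{c(v_1)}$ and $a_5\in A_{c(y_4)}$, so $c(v_1)=c(y_4)$ and $a_2$ lies in this common class. It remains to choose, for each vertex $u$ of out-colour $\ne A$, a set of $k$ out-edges so that no such rainbow $3$-path $a_2\to x_3\to y_4$ is completed (afterwards we trim to out-degree exactly $k$ everywhere). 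Here the commitment pays off quantitatively: for a fixed intermediate vertex $x_3$, the ways it can complete a surviving bad copy are controlled by the number of its out-neighbours $y_4$ with $c(y_4)$ equal to the relevant class, and since $c(\cdot)$ is read off the random partition this is typically only a $1/m$-fraction of them; expanding this, every vertex of out-colour $\ne A$ has only $k^{O(1)}$ ``dangerous'' out-edges, far fewer than $k^{20}-k$, so there is ample room to select $k$ safe ones — provided we can do so at all vertices at once.

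\textbf{The main obstacle.} The crux is exactly that last proviso. The host is of unbounded size with unbounded in-degrees, so a union bound over vertices is unavailable, and the naive dependency graph for applying the Local Lemma (Lemma~\ref{lem:LLL}) to the events ``vertex $u$ has too many dangerous out-edges'' is not of bounded degree: a single ``popular'' vertex participates, through its many in-neighbours, in unboundedly many potential bad copies. Overcoming this — and it is where the rest of the work, the large exponent, and the hypotheses that $D$ is $1$-typed and $C_3^{(1)}$-free all get spent — requires exposing the randomness in stages (first the partition of $A$, then the residual out-edges processed by type, the edge $x_3\to y_4$ before the edge $a_2\to x_3$) and defining the bad events so that each depends only on choices inside a region whose size is bounded in terms of $k$ alone; the slack $k^{20}\gg k$ then makes each bad event's probability at most $k^{-\Omega(1)}$, small enough for the Local Lemma to close. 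The whole argument is symmetric in $B$ and $C$, so it handles both rainbow orientations simultaneously and yields the lemma.
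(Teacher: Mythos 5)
Your structural analysis of the bad copies is correct and is essentially the same preliminary observation the paper makes (it defines $V=N_1^-(A)$, notes $V$ is independent, and observes that every bad copy consists of two vertices $u,v\in V$ sharing an out-neighbour in $A$ plus a directed $3$-path from $u$ to $v$). However, the rest of your proposal has a genuine gap, and you say so yourself: ``The crux is exactly that last proviso.'' You correctly identify the core difficulty — in-degrees in $D$ are unbounded, so a vertex can participate in unboundedly many potential bad copies and the naive dependency digraph for the Local Lemma is not of bounded degree — but you do not resolve it. The paragraph that follows (``exposing the randomness in stages... defining the bad events so that each depends only on choices inside a region whose size is bounded in terms of $k$ alone'') is a description of what a proof would have to accomplish, not a construction that accomplishes it. In a lemma whose entire content is overcoming this obstacle, leaving it at the level of intent is not a proof.

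Your proposed mechanism for the first stage is also different from the paper's, and it introduces additional unaddressed issues. You randomly partition $A$ into $m=k^{18}$ classes and have each $v\in V$ commit to the class $A_{c(v)}$ maximizing $|N^+(v)\cap A_{c(v)}|$. But $c(v)$ is a deterministic, non-uniform function of the random partition, and for a bad copy you need $c(v_1)=c(y_4)$ \emph{and} the specific arcs $v_1\to a_2$, $v_1\to a_5$, $y_4\to a_5$ to survive the pruning, all of which are events read off the same random partition and hence correlated. Your ``typically only a $1/m$-fraction'' estimate is asserted, not derived, and is exactly the kind of claim that tends to fail without care in this regime. The paper's actual route avoids these concerns entirely: it truncates out-degrees first (to $k$ outside $V$ and to $d=k^{20}$ on $V$) so that the auxiliary multi-digraph $H(F_0)$ on $V$ has \emph{bounded out-degree} $dk^2$, hence is $2dk^2$-degenerate by Observation~\ref{obs: degenerate}; it then fixes a degeneracy ordering $v_1,\dots,v_n$ of $V$, sub-samples the $V\to A$ arcs with probability $p=k^{-15}$, and applies the Local Lemma with events indexed by $i$ whose dependency degree is controlled \emph{because} the degeneracy ordering bounds the back-in-degree in $H(F_0)$. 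This deterministic ordering is the missing idea: it converts the unbounded-in-degree problem into a bounded-dependency problem before any randomness is introduced. Your proposal would need an analogue of that step, and it is not there.
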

\begin{proof}
    Let $V=N_1^-(A)$. Since $D$ is $1$-typed, the out-neighborhood of every vertex in $V$ is contained in $A$. As $A$ is an independent set, $V$ is also an independent set. Additionally, every vertex which shares an out-neighbor with a vertex in $V$ must itself be in $V$. It follows that every copy of $C_5^{(2)}$ with source in $V$ consists of a path of length $3$ from $u$ to $v$ and a common neighbor of $u$ and $v$ in $A$, where $u,v\in V$. Therefore, it seems natural to define the following auxiliary digraph. For $D'\subseteq D$, let $H(D')$ be an auxiliary (multi-)digraph on $V$, where, for every two vertices $u,v \in V$ with $N^+_{D'}(u) \cap N^+_{D'}(v) \neq \varnothing$ and every arc $(u, w) \in A(D')$ such that there is a path of length $2$ from $w$ to $v$ in $D'$, we add an arc $e = (u,v)$ to $A(H(D'))$. We say that $e$ corresponds to $w$. Note that this potentially results in parallel edges in case there is more than one such edge $(u,w)$ for the same pair $(u,v)$. 
    
    For every copy of $C_5^{(2)}$ in $D$ with source in $V$, there exists an arc $(u,v)\in H(D)$ with corresponding vertex $w\in A$ such that $u$ is the source and $w$ the first vertex on the directed path from $u$ to $v$ in this copy of $C_5^{(2)}$ (however one arc in $H(D)$ may correspond to several different copies of $C_5^{(2)}$). Therefore, if $H(D)$ contains no arcs then there are no copies of $C_5^{(2)}$ with source in $V$ in $D$. We will show that there exists an appropriate $D'\subseteq D$ such that $H(D')$ has no arcs. To do so, we proceed in two stages. First, we find a subgraph $F\subseteq D$ together with an ordering of the vertices in $V$ which limits the interaction between the out-edges of each vertex with the out-edges of the vertices preceding it. Then, we obtain $D'$ by processing the vertices in the given order, always restricting their neighborhood to a subset such that there is no copy of $C_5^{(2)}$ with any previous vertex.
    \begin{claim}\label{clm: main claim}
        There exists an ordering $v_1,\ldots,v_n$ of $V$ and a spanning subdigraph $F\subseteq D$ such that $d_F^+(v)\ge 3k^4$ for every $v\in V\cap V(F)$ and $d_F^+(v)=k$ for every $v\in V(F)\setminus V$. In addition, every vertex $v_i$ has at most $2k$ in-neighbors $v_j$ in $H(F)$ with $j<i$.
    \end{claim}
    Before we prove Claim~\ref{clm: main claim}, let us show how to make use of it. Let $D'\subseteq F$ be the digraph obtained by handling the vertices of $V$ in the given order, restricting their out-neighborhood to a subset of size $k$ one at a time the following way. Fix some $i$ and suppose we have already chosen the out-neighbors of $v_j$ for $j<i$. We would like to pick $N\subseteq N^+_{F}(v_i)$ of size $k$ such that if we connect $v_i$ to $N$ in $D'$ then there are no arcs in $H(D')[\{v_1,\ldots,v_i\}]$. Note that $H(D')\subseteq H(F)$ so it suffices to select $N$ such that none of the arcs in $H(F)[\{v_1,\ldots,v_i\}]$ remain. By the choice of $F$, there are at most $2k$ in-neighbors $v_j$ of $v_i$ with $j<i$ in $H(F)$. Since each such $v_j$ has $k$ out-neighbors in $D'$ (which have already been selected), there exists $N'\subseteq N^+_{F}(v_i)$ of size $3k^4-2k^2$ which is disjoint from the out-neighborhood of all these $v_j$. Restricting our attention to $N'$ guarantees us that $v_i$ will not have any in-neighbors in $H(D')[\{v_1,\ldots,v_i\}]$. Next, we consider arcs $(v_i,v_j)$, $j<i$, in $H(F)[\{v_1,\ldots,v_i\}]$. Recall that the arc $(v_i,v_j)$ corresponds to some vertex $w\in N^+_{F}(v_i)$. To exclude the copies of $C^{(2)}_5$ associated with $(v_i,v_j)$ and $w$, it is enough to exclude either $w$ or $N^+_{D'}(v_j)$ from $N$. Thus, it is sufficient to choose $N\subseteq N'$ in such a way that whenever $w_1,w_2\in N$, then there is no path of length $3$ from $w_1$ to $w_2$ through some vertex $v_j$ with $j<i$. So, let $G$ be the auxiliary undirected graph on $N'$ where $w_1$ and $w_2$ are connected exactly if such a path exists. Since every vertex $w$ is the starting point of at most $k^3$ such paths of length $3$, the average degree of $G$ is at most $2k^3$. Thus, $G$ contains an independent set $N$ of size $k$, since
    $$\frac{|N'|}{2k^3+1}=\frac{3k^4-2k^2}{2k^3+1}\geq k.$$

    After repeating the above for every $i$, we obtain $D'\subseteq F$ such that every vertex has out-degree $k$ and $H(D')$ is empty. By the definition of $H$, it follows that $D'$ does not contain any copies of $C_5^{(2)}$ with source in $V$.
    \begin{proof}[Proof of Claim~\ref{clm: main claim}]
        Set $d:=k^{20}$ and recall that $k\geq 100$. First, arbitrarily remove all but $k$ out-edges for every vertex in $V(D)\setminus V$, and all but $d$ out-edges for every vertex in $V$, to obtain $F_0\subseteq D$.
        Each vertex in $V$ can be the start of at most $dk^2$ paths of length $3$ in $F_0$ ending at some vertex in $V$. Therefore, every vertex has at most $dk^2$ out-edges in $H(F_0)$. Hence, by Observation~\ref{obs: degenerate}, $H(F_0)$ is $2dk^2$-degenerate. Let $v_1, \dots, v_n$ be an ordering of $V=V(H(F_0))$ witnessing the degeneracy. In particular, for each $v_i$, there are at most $2dk^2$ arcs $(v_j, v_i)$ with $j<i$. Let $F\subseteq F_0$ be obtained by sub-sampling every arc from $V$ to $A$ independently with probability $p=k^{-15}$. 
        
        Towards using Lemma~\ref{lem:LLL}, let us define the following events. For each vertex $v_i$, let $A_i$ be the event that $d^+_{F}(v_i) \notin [\frac{1}{2} dp, \frac{3}{2}dp]$ and let $B_i$ be the event that $v_i$ has more than $2k$ in-neighbors $v_j$ in $H(F)$ with $j<i$. Note that $dp/2\geq 3k^4$, so it is enough to show that with positive probability none of the events $A_i$ or $B_i$ happen.
        
       Besides the dependence of $A_i$ and $B_i$, $A_i$ only depends on events $B_j$ with $i<j$ and $(v_i,v_j)\in H(F_0)$, while $B_i$ also depends on events $A_j,B_j$ for $j<i$ and $(v_j,v_i)\in H(F_0)$, as well as events $B_\ell$ for which there is some $j<i,\ell$ with $(v_j,v_i),(v_j,v_\ell)\in H(F_0)$. As every vertex $v_i$ has at most $dk^2$ out-edges in $H(F_0)$ and by the choice of $v_1, \dots, v_n$, at most $2dk^2$ in-neighbors $v_j$ with $j<i$, it follows that the event $A_i\vee B_i$ depends on at most $5d^2k^4$ other events $A_j\vee B_j$. In the terminology of Lemma~\ref{lem:LLL}, we then have $\Delta\leq 5d^2k^4$.
        
       By a standard application of the Chernoff bound, we get that $\Pr[A_i] \leq 2e^{-dp/12}\leq e^{-k}$. Next, let us consider the event that $B_i$ happens conditioned on $\overline{A_i}$. For each $j <i$, let $X_{i,j}$ be the indicator variable that $v_j$ is an in-neighbor of $v_i$ in $H(F)$ and let $X_i =\sum_{j=1}^{i-1}X_{i,j}$. Note that $B_i$ is the event that $X_i>2k$. Let us reveal all the out-edges of $v_i$ in $F$ and note that $d^+_{F}(v_i)\leq 3dp/2$, as we condition on $\overline{A_i}$. Then, $(v_j, v_i) \in H(F)$ only if $N^+_{F}(v_j)$ intersects $N^+_{F}(v_i)$ and there is a path of length $3$ from $v_j$ to $v_i$ in $F$. Let $Y_{i,j}$ be the event that $N^+_{F}(v_j)$ intersects $N^+_{F}(v_i)$. By a union bound, we get 
    $$
    \Pr[Y_{i,j}]\leq \frac{3}{2}dp\cdot p.
    $$
    Suppose there are $m_{i,j}$ parallel arcs from $v_j$ to $v_i$ in $H(F_0)$. Each of these corresponds to a different $w\in A$ for which there is a path of length $3$ from $v_j$ to $v_i$ in $F_0$ through $w$. So, a path of length $3$ from $v_j$ to $v_i$ in $F$ exists only if at least one of the arcs $(v_j, w)$ is preserved in $F$. Call this event $P_{i,j}$. A union bound yields
    $$\Pr [P_{i,j}] \leq p m_{i,j}.$$
    Observe that the events $P_{i,j}$ and $Y_{i,j}$ are independent as $N^+_{F_0}(v_i)$ is disjoint from $$\{w\mid \text{there exists $(v_j,v_i)\in H(F_0)$ corresponding to $w$}\},$$ because $F_0$ does not contain a copy of $C_3^{(1)}$. Thus, we can conclude
    $$ \Pr[X_{i,j}=1] = \Pr[(v_j,v_i) \in H(F)]\le \Pr[Y_{i,j} \wedge P_{i,j}] = \Pr[Y_{i,j}] \Pr [P_{i,j}] \leq 3 dp^3 m_{i,j}$$
    and so, using $\sum_{j=1}^{i-1}m_{i,j}\leq 2dk^2$,
    $$ \mathbb{E}[X_i] = \sum_{j=1}^{i-1} \Pr[X_{i,j}=1] \leq \sum_{j=1}^{i-1} 3dp^3 m_{i,j} \leq 6d^2k^2p^3\leq 1.$$
    Recall that we already revealed the out-edges from $v_i$. Consequently, the variables $X_{i,1}, \dots, X_{i,i-1}$ are independent since $X_{i,j}$ only depends on the out-edges from $v_j$. Thus, we can apply Lemma~\ref{lem: chernoff} to $X_i$ to deduce that
    $$ \Pr[X_i > 2k] \leq 2^{-2k}.$$
    As this holds no matter the arcs we revealed from $v_i$, we get 
    $$\Pr[B_i\mid \overline{A}_i]\leq 2^{-2k}.$$
    Finally, we get
    $$
    \Pr[A_i\vee B_i] \leq \Pr[A_i]+\Pr[B_i\mid \overline{A_i}]\leq e^{-k}+2^{-2k}<\frac{1}{e\Delta}.
    $$
    By Lemma~\ref{lem:LLL}, with positive probability, none of the events $A_i$ or $B_i$ happen.
    \end{proof}
\end{proof}
\begin{proof}[Proof of Theorem~\ref{thm: main theorem}]
Instead of showing that each of the orientations of $C_3$ and $C_5$ is avoidable, we show that we can avoid them all at the same time, which certainly implies that each of them is avoidable. Without loss of generality, let us assume that $k\geq 100$. Let $d$ be large enough such that, by Theorem~\ref{lem:remove_directed_cycles}, there exists $D_1\subseteq D$ with minimum out-degree $27k^{20^3}$ such that $D_1$ does not contain a copy of $C_3^{(1)}$ or $C_5^{(1)}$.
Then, apply Theorem~\ref{thm: majority coloring} to $D_1$ to obtain $3$-partite $D_2\subseteq D_1$ with minimum out-degree $9k^{20^3}$. Furthermore, by Lemma~\ref{lem: typing}, there exists a $2$-typed $D_3\subseteq D_2$ with minimum out-degree $k^{20^3}$. By Lemma~\ref{lem: easy orientations}, $D_3$ does not contain a copy of $C_3^{(2)}$, $C_5^{(3)}$ or $C_5^{(4)}$. Therefore, $D_3$ does not contain a triangle and every cycle of length $5$ in $D_3$ is a copy of $C_5^{(2)}$. 

Let $(A,B,C)$ be a partition of $V(D_3)$ witnessing that $D_3$ is $2$-typed. By Lemma~\ref{lem: C5}, there exists $D_4\subseteq D_3$ with minimum out-degree $k^{20^2}$ such that no vertex in $N^-_{D_4}(A)$ is the source of a copy of $C_5^{(2)}$ in $D_4$. Similarly, we may get $D_5\subseteq D_4$ with minimum out-degree $k^{20}$ such that no vertex in $N^-_{D_5}(B)$ is the source of a copy of $C_5^{(2)}$ in $D_5$ and, finally, $D_6\subseteq D_5$ with minimum out-degree $k$ such that no vertex in $N^-_{D_6}(C)$ is the source of a copy of $C_5^{(2)}$ in $D_6$. Then, $D_6$ does not contain a copy of $C_5^{(2)}$, as $(N^-_{D_4}(A),N^-_{D_5}(B),N^-_{D_6}(C))$ is a partition of $V(D_6)$.
\end{proof}

\section{Regular-avoidable digraphs}
In this section we prove Theorem~\ref{thm: regular avoidable}. As mentioned before, it was proved~\cite{KAMAK} by the first and fourth authors of this paper that any grounded tree is contained in all digraphs with sufficiently large minimum out-degree. Hence, grounded forests are not regular-avoidable. So, it remains to show that if $H$ is not a grounded forest then it is regular-avoidable. Our proof distinguishes two cases, namely whether $H$ is a forest or not. The following lemma resolves the case when $H$ is not a forest.
\begin{lemma}\label{lem: regular cycles}
    Let $k,\ell\in \mathbb{N}$ such that $k$ is large enough as a function of $\ell$. Let $D$ be a $2k^{\ell+1}$-regular digraph. Then, there exists $D'\subseteq D$ with minimum out-degree $k$ and no cycle of length $\ell$.
\end{lemma}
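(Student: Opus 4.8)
The plan is to obtain $D'$ by keeping each arc of $D$ independently with probability $p:=k^{-\ell}$ and then applying the Lov\'asz Local Lemma (Lemma~\ref{lem:LLL}) to show that, with positive probability, the resulting spanning subdigraph simultaneously has minimum out-degree at least $k$ and contains no cycle of length $\ell$ in its underlying (multi)graph. In $D'$ every vertex has expected out-degree $2k^{\ell+1}\cdot p=2k$. For each vertex $v$ let $A_v$ be the bad event that $d^+_{D'}(v)<k$, and for each cycle $C$ of length $\ell$ in the underlying graph of $D$ let $B_C$ be the bad event that all $\ell$ arcs of $C$ survive in $D'$; if none of these events occurs we are done. (This is exactly the reduction needed for the non-forest case of Theorem~\ref{thm: regular avoidable}: applying the lemma with $\ell$ equal to the girth of the underlying graph of the target digraph $H$ yields an $H$-free subdigraph, since any copy of $H$ would contain a copy of $C_\ell$ in its underlying graph.)

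By a Chernoff bound $\Pr[A_v]\le e^{-k/8}$, while $\Pr[B_C]=p^\ell=k^{-\ell^2}$; hence, once $k$ is large enough in terms of $\ell$, every bad event has probability at most $q:=k^{-\ell^2}$. For the dependencies, note that $A_v$ is determined by the $2k^{\ell+1}$ out-arcs of $v$, so it is mutually independent of all other events $A_u$ and of every event $B_C$ except those for which $C$ uses an out-arc of $v$. Since $D$ is $2k^{\ell+1}$-regular, its underlying graph has maximum degree at most $4k^{\ell+1}$, so the number of $\ell$-cycles through any fixed edge is at most $(4k^{\ell+1})^{\ell-2}$ (a walk of length $\ell-2$ between its endpoints must be chosen). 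Consequently $A_v$ depends on at most $2k^{\ell+1}\cdot(4k^{\ell+1})^{\ell-2}=O_\ell(k^{\ell^2-1})$ events, and $B_C$ depends on at most $\ell$ events $A_v$ together with at most $\ell\cdot(4k^{\ell+1})^{\ell-2}=O_\ell(k^{\ell^2-\ell-2})$ events $B_{C'}$ (those sharing an edge with $C$). So every bad event depends on at most $\Delta=O_\ell(k^{\ell^2-1})$ others.

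It then remains to check the Lov\'asz Local Lemma condition $eq\Delta<1$, which here reads $e\cdot k^{-\ell^2}\cdot O_\ell(k^{\ell^2-1})=O_\ell(k^{-1})<1$, valid for $k$ large as a function of $\ell$; Lemma~\ref{lem:LLL} then produces the desired $D'$. I expect the only real difficulty to be the bookkeeping around the choice of $p$: it must be large enough that the expected out-degree $2k^{\ell+1}p$ comfortably exceeds $k$ (so that Chernoff gives $\Pr[A_v]\le q$), yet small enough that $p^\ell$ beats the number $O_\ell(k^{\ell^2-1})$ of $\ell$-cycles meeting a given arc (so that $eq\Delta<1$). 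The reason such a $p$ exists with room to spare is that $(\ell+1)(\ell-2)-\ell^2=-\ell-2<0$, so the exponent of $k$ in $eq\Delta$ is strictly negative regardless of $\ell$.
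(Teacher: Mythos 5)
Your proof is correct and follows essentially the same approach as the paper: sub-sample arcs independently with probability $p=k^{-\ell}$, define bad events for low out-degree at each vertex and for survival of each $\ell$-cycle in the underlying graph, and apply the Lov\'asz Local Lemma with the same dependency bound $\Delta=O_\ell(k^{\ell^2-1})$ (the paper writes $\ell(2d)^{\ell-1}$ with $d=2k^{\ell+1}$, which is the same order) and the same probability bounds $p^\ell=k^{-\ell^2}$ and $e^{-\Theta(k)}$. The only differences are cosmetic: a swap of the labels $A$ and $B$ for the two families of bad events and slightly different Chernoff constants, neither of which affects the argument.
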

\begin{proof}
    Towards applying Lemma~\ref{lem:LLL}, set $d=2k^{\ell+1}$, $p=k^{-\ell}$ and let $D'\subseteq D$ be obtained by sub-sampling each arc independently with probability $p$. For each (not necessarily directed) cycle $C$ of length $\ell$ in $D$, denote by $A_C$ the event that $C$ is in $D'$. For each $v\in V(D)$, denote by $B_v$ the event that $d^+_{D'}(v)\leq dp/2= k$.
    
    Note that each edge is contained in at most $(2d)^{\ell-2}$ cycles in $D$. So, $A_C$ depends on at most $\ell(2d)^{\ell-2}$ other events $A_{C'}$ and $B_v$ depends on at most $d\cdot (2d)^{\ell-2}$ events $A_{C'}$. Furthermore, $A_C$ also depends on $\ell$ events $B_v$. Therefore, none of the above events depends on more than $\Delta=\ell(2d)^{\ell-1}\ll k^{\ell^2}$ other events. One can see that $\Pr[A_C]=p^{\ell}=k^{-\ell^2}$ and, by the Chernoff bound, we get 
    $$
    \Pr[B_v]\leq e^{-dp/12}=e^{-k/6}\ll k^{-\ell^2}.
    $$
    By Lemma~\ref{lem:LLL}, there exists $D'\subseteq D$ such that none of the events $A_C$ or $B_v$ occur. Then, $D'$ has minimum out-degree at least $k$ and no cycle of length $\ell$.
\end{proof}
If $H$ is a forest but not grounded then there are two vertices in $H$ with in-degree more than $1$ such that there is a path between them having a different number of forward and backward arcs. The following lemma allows us to pass to a subdigraph in which every such path is long.
\begin{lemma}\label{lem: partitioning}
    Let $t$ be a positive integer, $k$ a sufficiently large integer as a function of $t$, and $D$ a $k^{2t}$-regular digraph. Then, there exists $D'\subseteq D$ with minimum out-degree $k$ and partition $(V_1,\ldots,V_t)$ of $V(D')$ such that every arc in $D'$ goes from $V_i$ to $V_{i+1}$ for some $1\leq i\leq t$, (where $V_{t+1}=V_1$), and the vertices in $V(D')\setminus V_1$ have in-degree at most $1$. 
\end{lemma}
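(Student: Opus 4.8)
I would start by \emph{reformulating the target}. A subdigraph $D'\subseteq D$ together with a partition $(V_1,\ldots,V_t)$ as in the statement is exactly the same data as a vertex-disjoint family of out-arborescences of height $t-1$ in which every non-leaf has at least $k$ children (roots $=V_1$, depth-$i$ vertices $=V_{i+1}$, leaves $=V_t$), plus, for each leaf, at least $k$ extra arcs from it into $V_1$. Indeed, deleting the $V_t\to V_1$ arcs leaves $V_1$ with in-degree $0$ and $V_2,\dots,V_t$ with in-degree $\le 1$, i.e. a union of out-trees rooted in $V_1$; since every vertex of $V_1\cup\dots\cup V_{t-1}$ still has out-degree $\ge k$ whereas $V_t$ consists of sinks, every root-to-leaf path has length exactly $t-1$. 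So it suffices to exhibit inside $D$: a cyclic $t$-colouring respecting the arc directions, a root set $R$ in the first colour class, vertex-disjoint branching-$\ge k$ height-$(t-1)$ out-arborescences rooted at $R$, and for each leaf $k$ of its out-arcs landing in $R$.

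For the colouring step I would colour $V(D)$ by a uniformly random map $c\colon V(D)\to\mathbb{Z}/t\mathbb{Z}$ and keep only the arcs $(u,v)$ with $c(v)=c(u)+1$. Each arc survives with probability $1/t$, so by the Local Lemma (Lemma~\ref{lem:LLL}) applied to the events ``some vertex retains fewer than $d/(2t)$ out-arcs'' --- each of probability $e^{-\Omega(d/t)}$ by the Chernoff bound and depending on only $O(d^2)$ others, which is harmless because $d=k^{2t}$ dwarfs $t$ --- one obtains a spanning subdigraph $D_1$ with colour classes $W_1,\ldots,W_t$, all arcs running $W_i\to W_{i+1}$ cyclically, and $\delta^+(D_1)\ge d_1:=\lfloor k^{2t}/(2t)\rfloor$. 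Passing to a terminal strongly connected component I may further assume $D_1$ is strongly connected (its out-arcs stay inside, so $\delta^+$ is unchanged, and a directed cycle through it meets all $t$ colours), with $d_1$ still enormous compared with $k$ and $t$.

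It then remains to produce the root set $R\subseteq W_1$ together with the closing arborescences, and this is the step I expect to be the main obstacle. A natural device is the pull-back operator $\Phi$ on subsets of $W_1$, where $\Phi(X)$ is the set of vertices that root a branching-$\ge k$ out-arborescence of height $t-1$ all of whose leaves have at least $k$ out-neighbours in $X$; it is monotone and satisfies $\Phi(W_1)=W_1$ (every vertex of $D_1$ has out-degree $\ge d_1\ge k$ throughout), so ``self-closing'' sets $R\subseteq\Phi(R)$ exist and are closed under unions. One wants to choose such an $R$ that is moreover \emph{spread out enough} in $D_1$ that the $k$ children at each of the $t-1$ levels can be picked vertex-disjointly, i.e. a Hall-type expansion condition holds level by level. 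This is exactly where $d$-regularity of $D$ must be used in an essential way: when no small randomly chosen $R$ can close up (the relevant bad case at large order), one has to read off structure of $D_1$ instead --- in a ``locally dense'' host $R$ can be localized inside a single dense piece, where the $W_t$-vertices already see a big slice of $W_1$ so that a bounded $R$ closes up, while in an ``expanding'' host $R$ can be taken to be a small but positive-density subset of $W_1$, after which the level-by-level Hall conditions hold automatically; reconciling these regimes into one uniform statement about how $\Phi$ interacts with the expansion profile of $D_1$ is the real work. Given $R$ and vertex-disjoint arborescences $(T_r)_{r\in R}$, I would set $V_1:=R$ and let $V_{i+1}$ be the set of depth-$i$ vertices across all the $T_r$, keep every arborescence arc, and add to each leaf $k$ of its out-arcs into $R$; the resulting $D'\subseteq D$ then has $\delta^+(D')\ge k$, the cyclic $t$-partition $(V_1,\ldots,V_t)$, and in-degree $\le 1$ outside $V_1$, completing the proof. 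In summary, the colouring and the final assembly are routine; the difficulty is concentrated in securing, for every $k^{2t}$-regular $D$, a root set that is at once self-closing and loose enough to carry vertex-disjoint branching arborescences of height $t-1$.
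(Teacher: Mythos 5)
Your reformulation of the target as a family of branching-$\ge k$ out-arborescences of height $t-1$ rooted in $V_1$ whose leaves each send $k$ arcs back to $V_1$ is correct, and your first colouring step is also correct as far as it goes. But the proposal contains a genuine gap precisely where you flag ``the main obstacle,'' and that gap is not incidental: the choice of a \emph{uniform} random cyclic $t$-colouring is the wrong first move, and it is what makes the remainder hard. After the uniform colouring, each colour class has roughly the same size and every vertex of $W_{i+1}$ receives roughly $d/t$ in-arcs from $W_i$. When you then thin so that each vertex outside $W_1$ keeps a single in-arc, a typical $v\in W_i$ retains in expectation about $\sum_{u\in N^+(v)\cap W_{i+1}} 1/d^-(u,W_i)\approx (d/t)/(d/t)=1$ out-arcs, not $\ge k$. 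There is simply no slack, and this is why you are forced into the vague ``pull-back operator $\Phi$, self-closing root sets, level-by-level Hall conditions, dense vs.\ expanding regime'' discussion, which you explicitly leave unresolved.

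The paper's proof avoids all of this with one idea you are missing: colour $v$ with colour $i$ with \emph{geometrically increasing} probability $p_i$, chosen so that $p_{i+1}/p_i=6k$ (normalised so $\sum p_i=1$, and with $d=k^{2t}$ chosen so that $p_1d\gg k$). A first application of the Local Lemma gives a colouring in which every vertex has $d^+(v,V_{i+1})\ge p_{i+1}d/2$ and $d^-(v,V_i)\le 3p_id/2$ for all $i$. After restricting to arcs $V_i\to V_{i+1}$ and letting each vertex outside $V_1$ pick one uniformly random in-arc, a vertex $v\in V_i$ (for $i<t$) retains in expectation at least $(p_{i+1}d/2)\big/(3p_id/2)=p_{i+1}/(3p_i)=2k$ out-arcs, and $v\in V_t$ keeps all $\ge p_1d/2\ge k$ of its out-arcs into $V_1$; a Chernoff bound plus a second application of the Local Lemma then finishes. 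So the entire difficulty you are trying to handle via arborescence-building and expansion dichotomies dissolves once the colour-class sizes are skewed to grow by a factor $\sim k$ per level. As written, your argument does not reach a proof; you would need to replace the uniform colouring by the skewed one (or supply a genuine construction of the self-closing, Hall-expanding root set, which is substantially harder than the lemma itself).
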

\begin{proof}
    Set $d=k^{2t}$, $p_i = \frac{1-(6k)^{-1}}{1-(6k)^{-t}}\cdot(6k)^{i-t}$ for $1\leq i\leq t$. While the choice of these $p_i$ is mostly arbitrary, note that $\sum p_i=1$ and we also use the following two inequalities. First, it will be handy that $p_i d\gg k$ for all $i$ and, second, $p_{i+1}/p_{i}= 6k$ for all $1\leq i\leq t-1$.
    \begin{claim}
        There exists a partition $(V_1,\ldots,V_{t})$ of $V(D)$ such that for all $v\in V(D)$ and $1\leq i\leq t$, it holds that $d^+(v,V_i)\geq p_id/2$ and $d^-(v,V_i)\leq 3p_id/2$.
    \end{claim}
    \begin{proof}
        Assign to each vertex a color from $\{1,\ldots,t\}$ independently of the other vertices, where we assign color $i$ with probability $p_i$. With the help of Lemma~\ref{lem:LLL}, we show that this coloring gives the desired partition with positive probability, where $V_i$ is the set of vertices assigned color $i$. For each vertex $v$, let $A_{v}$ denote the event that there exists $1\leq i\leq t$ such that $v$ has less than $p_id/2$ out-neighbors with color $i$ or more than $3p_id/2$ in-neighbors with color $i$. Note that each event $A_v$ only depends on the coloring of the vertices adjacent to $v$. As $D$ is $d$-regular it follows that $A_v$ depends on at most $(2d)^2$ other events $A_w$. Therefore, it remains to show $\Pr[A_v]<1/(4ed^2)$. Let us fix a vertex $v\in V(D)$ and some color $1\leq i\leq t$. Let $X_{v,i}^+, X_{v,i}^-$ denote the number of out- (respectively in-)neighbors of $v$ with color $i$. As $X_{v,i}^+$ follows a binomial distribution with mean $p_id$, Chernoff's bound implies $\Pr[X_{v,i}^+<p_id/2]\leq e^{-p_id/12}\leq e^{-k}$. Similarly, we get $\Pr[X_{v,i}^->3p_id/2]\leq e^{-p_id/8}\leq e^{-k}$. By a union bound over all colors, we get
        $$
        \Pr[A_v]\leq 2te^{-k}\ll d^{-2}.
        $$
    \end{proof}
    Let $F\subseteq D$ be obtained by keeping only the arcs from $V_i$ to $V_{i+1}$ for each $i$, where $V_{t+1}=V_1$.
        Let $D'\subseteq F$ be obtained by sub-sampling a unique in-arc (if there exists one) for every $v\in V(F)\setminus V_1$ (while keeping all the in-arcs for $v\in V_1$). We use Lemma~\ref{lem:LLL} to show $D'$ satisfies the requirements with positive probability. Every vertex in $V(F)\setminus V_1$ is guaranteed to have in-degree at most $1$ by the choice of $D'$. Therefore, it suffices to show that every vertex $v\in V(F)$ has out-degree at least $k$. Let $A_v$ be the event that $v$ has out-degree less than $k$ in $D'$. If $v\in V_t$ then $d^+_{D'}(v)=d^+_F(v)\geq p_1d/2\geq k$. Suppose then that $v\in V_i$ for some $1\leq i\leq t-1$. For arcs $(v,u)\in A(F)$, denote by $X_{v,u}$ the indicator random variable for $u$ sub-sampling $(v,u)$ and set $X_v:=\sum_{(v,u)\in A(F)}X_{v,u}$. Then, $A_v$ is precisely the event that $X_v<k$. Observe that 
        $$\mathbb{E}[X_v]=\sum_{(v,u)\in A(F)}\mathbb{E}[X_{v,u}]=\sum_{(v,u)\in A(F)}\frac{1}{d^-_F(u)}\geq \frac{p_{i+1}d/2}{3p_{i}d/2} = 2k.$$ 
        A simple application of the Chernoff bound shows that $\Pr[A_v]=\Pr[X_v<k]\leq e^{-k/6}$.
        Note that each event $A_v$ only depends on events $A_w$, where $w$ shares an out-neighbor with $v$. As $F\subseteq D$ and $D$ is $d$-regular, it follows that $A_v$ depends on at most $d^2$ other events. As $d^2\cdot e^{-k/6}\ll 1$, the statement follows from Lemma~\ref{lem:LLL}.
\end{proof}
\begin{proof}[Proof of Theorem~\ref{thm: regular avoidable}]
    Let $H$ be any digraph which is not a grounded forest and let us assume without loss of generality that $k$ is large enough compared to $|V(H)|$. By Lemma~\ref{lem: regular cycles}, we may assume that $H$ is a forest. Let $D'\subseteq D$ be as in Lemma~\ref{lem: partitioning} with $t=|V(H)|$. It remains to show that $D'$ does not contain a copy of $H$. As $H$ is not grounded, there exist vertices $u,v\in V(H)$ with in-degree more than $1$ such that the path from $u$ to $v$ in $H$ has a different number of forward and backward arcs. Observe that this path has length at most $|V(H)|-1<t$. Suppose towards a contradiction that there exists an embedding of $H$ into $D'$. Then, $u$ and $v$ both get embedded into $V_1$ as they have in-degree more than $1$. But this is a contradiction, as every path starting and ending in $V_1$ of length less than $t$ has the same number of forward and backward edges.
\end{proof}
\section{Open questions}
The main open problem left from our work is to characterize which digraphs are avoidable. Given that this is closely linked (at least in spirit) to Thomassen's conjecture, which currently remains wide open for undirected graphs, a full characterization may be elusive at this point. Thus, in the following we highlight some interesting and challenging questions that we deem more approachable.
First of all, in light of Theorem~\ref{thm: main theorem}, we are tempted to conjecture that all orientations of odd cycles are avoidable.
\begin{conjecture}\label{conj}
    All orientations of odd cycles are avoidable.
\end{conjecture}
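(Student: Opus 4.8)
The natural plan is to carry the proof of Theorem~\ref{thm: main theorem} from $C_3$ and $C_5$ up to a general odd cycle $C_{2m+1}$, following the same three-step reduction; all implied constants below are allowed to depend on $m$. Starting from a digraph $D$ whose minimum out-degree is a sufficiently large polynomial in $k$, one first applies Theorem~\ref{lem:remove_directed_cycles} a bounded number of times (once for each length up to $2m+1$) to pass to a subdigraph, still of polynomially large minimum out-degree, that contains no directed cycle of length at most $2m+1$; then Theorem~\ref{thm: majority coloring} to pass to a $3$-partite subdigraph; then Lemma~\ref{lem: typing} to pass to an $s$-typed subdigraph $D^\ast$ with witness $(A,B,C)$, taking $s=2m$ (any directed sub-path of an orientation of $C_{2m+1}$ has length at most $2m$), at the cost of a constant factor. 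What remains is to destroy, inside $D^\ast$, every non-directed orientation of $C_{2m+1}$, while retaining minimum out-degree $k$.

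A first batch of orientations is eliminated for free, generalizing Lemma~\ref{lem: easy orientations}: in $D^\ast$ the set $N_i^+(v)$ is independent for every vertex $v$ and every $i\leq s$, and whenever two vertices share an out-neighbour, the union of their $i$-th out-neighbourhoods is independent for every $1\leq i\leq s-1$. Hence an orientation $F$ of $C_{2m+1}$ cannot embed into $D^\ast$ if it has a vertex $u$ together with two $F$-adjacent vertices reachable from $u$ along directed paths of equal length at most $s$, or two $F$-adjacent vertices in the $i$-th out-neighbourhoods of two vertices sharing an out-neighbour. For $C_5$ this kills exactly $C_3^{(2)}$, $C_5^{(3)}$, $C_5^{(4)}$, leaving only $C_5^{(2)}$; in general it kills, for instance, every one-source orientation whose two source-to-sink paths differ in length by $1$. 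The first genuine task is to pin down exactly which orientations survive this step, and this is already subtler than for $C_5$: some orientations with several sources also survive, for example the orientation of $C_7$ with arcs $(v_1,v_2),(v_2,v_3),(v_4,v_3),(v_4,v_5),(v_5,v_6),(v_7,v_6),(v_1,v_7)$.

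For each surviving (``hard'') orientation $F$, the plan is to generalize Lemma~\ref{lem: C5}; crucially, since hard orientations may be removed one at a time, we may afford to lose a polynomial factor in the minimum out-degree for each of the (possibly many) surviving orientations, which is harmless as $m$ is fixed. Every copy of $F$ contains a local source whose out-neighbourhood in $D^\ast$ lies in a single colour class, so it suffices to destroy, for each colour in turn, every copy of $F$ whose source points into that colour. Fixing the colour $A$ and writing $V:=N_1^-(A)$, one sets up an auxiliary multidigraph $H(D')$ on a suitable set of anchor vertices — for $C_5^{(2)}$, the set $V$ itself, with an arc $(u,v)$ recording a source $u$ together with a late vertex $v$ on the long directed sub-path and corresponding to an early vertex of that sub-path — arranged so that $H(D')$ being arcless certifies that no such copy of $F$ occurs. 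The two-stage argument of Lemma~\ref{lem: C5} then proceeds as before: (i) since after a first pruning all out-degrees are bounded by a polynomial in $k$, the auxiliary digraph $H(F_0)$ of the pruned subdigraph $F_0$ has polynomially bounded degeneracy, yielding an ordering of the anchors in which each has only $O(k)$ backward in-neighbours in $H$; (ii) sub-sampling the arcs into $A$ with probability $p=k^{-\Theta(1)}$ and combining the Lov\'asz Local Lemma (Lemma~\ref{lem:LLL}) with Chernoff-type bounds (Lemma~\ref{lem: chernoff}) makes the backward $H$-in-degree uniformly $O(k)$ while keeping the minimum out-degree a large power of $k$; (iii) processing the anchors in this order and restricting each out-neighbourhood to a carefully chosen subset of size $k$ killing both the $H$-arcs into and out of the current anchor, using that each vertex starts only polynomially many short directed paths. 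The directed-cycle-freeness secured in the first step is precisely what makes the relevant sub-sampling events independent, exactly as $C_3^{(1)}$-freeness does in the proof of Lemma~\ref{lem: C5}.

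The hard part is step (iii): the clean trick of anchoring on $V=N_1^-(A)$ used for $C_5^{(2)}$ exploits two special features of that orientation — the copy has a single source, and the sink is an \emph{out-neighbour} of the source, hence also lies in $A$. For a one-source orientation whose two source-to-sink paths have lengths $a<b$ with $a\geq 2$ (such as the orientation of $C_7$ into directed paths of lengths $2$ and $5$), the sink sits at directed distance $a$ from the source and may lie in $B$ or $C$; then the auxiliary structure must be built over a \emph{pair} of sets — a source-side set $N_1^-(A)$ together with a sink-side colour class — or, equivalently, one must work with the full $s$-types rather than just their first letter, and carry the degeneracy and Local-Lemma computations through that more elaborate, bipartite-type object. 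For orientations with several sources the copy is not a theta graph at all, and one must design an auxiliary object recording its entire shape while still controlling its degeneracy and the ensuing independence structure. Making these two points work — while keeping the number of forbidden directed-cycle lengths, and hence of independence conditions, bounded in terms of $m$ — is where the real difficulty lies; the remainder is a faithful if increasingly bookkeeping-heavy extension of the $C_5$ argument, and would yield a bound of the form $d_F(k)=k^{O(1)}$ (with the exponent depending on $m$).
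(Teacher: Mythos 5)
The statement you are asked to prove is Conjecture~\ref{conj}, which the paper poses as an \emph{open problem}: there is no proof of it in the paper, and the authors explicitly state that Theorem~\ref{thm: main theorem} only settles the cases $C_3$ and $C_5$. So there is nothing in the paper to compare your argument against.

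More importantly, your proposal does not constitute a proof either, and you are candid about this yourself. Steps one and two (iterating Theorem~\ref{lem:remove_directed_cycles} over all lengths up to $2m+1$, applying Theorem~\ref{thm: majority coloring}, and taking an $s$-typed subdigraph via Lemma~\ref{lem: typing}) do carry over verbatim, and your observation that typing disposes of all orientations containing a vertex with two $F$-adjacent vertices at equal directed distance, or two $F$-adjacent vertices in equal-distance out-neighbourhoods of a common-out-neighbour pair, is a correct generalization of Lemma~\ref{lem: easy orientations}. But the remaining step (iii) is exactly where you stop: you note that the $C_5^{(2)}$ argument exploits that the orientation is a theta graph with one source whose two branches have lengths $1$ and $3$ and whose sink is an out-neighbour of the source, and that neither feature survives for, say, the $(2,5)$-theta orientation of $C_7$ or for multi-source orientations. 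You propose building a more elaborate auxiliary digraph over pairs of type classes, or recording the full shape of $F$, but you do not define such an object, verify its degeneracy, check that the sub-sampling events retain the independence needed for the Local Lemma computation, or carry out the greedy pruning. Absent these, what you have is an outline of an attack together with a correct diagnosis of the obstacles -- a reasonable research plan, but not a proof, and the conjecture remains open.
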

A positive resolution of Conjecture~\ref{conj} would establish a very nice analog to the existence of bipartite subgraphs of high minimum degree: It would yield that every digraph of large minimum out-degree has a subdigraph with still large minimum out-degree that has large odd girth and thus locally looks bipartite.

One can observe that all digraphs for which we could prove in this paper that they are not avoidable allow a height function, leading us to the following problem. 
\begin{question}
Are all digraphs which allow a height function not avoidable?
\end{question}
The simplest digraph which allows a height function for which we do not know whether it is avoidable is the orientation of $C_4$ consisting of two parallel directed paths of length two. We believe this is the most difficult orientation of $C_4$, so a resolution to the above question is likely to also answer the following.
\begin{question}
Which orientations of $C_4$ are avoidable?
\end{question}

Finally, we also would like to highlight another setting. We showed that the question of avoidability is much simpler if we restrict to regular digraphs. But what if we restrict ourselves to Eulerian digraphs, i.e., digraphs where each vertex has the same out- as in-degree? We say that a digraph $F$ is \emph{Eulerian-avoidable} if there exists $d_F:\mathbb{N}\rightarrow \mathbb{N}$ such that every Eulerian digraph with minimum out-degree $d$ contains an $F$-free subdigraph with minimum out-degree $k$. Which digraphs are Eulerian-avoidable? The motivation for this question is that it might yield a new approach to gain insight into the undirected setting and Thomassen's conjecture. An interesting starting point could be to resolve the following question.

\begin{question}
Is every orientation of $C_4$ Eulerian-avoidable?
\end{question}

\paragraph*{Acknowledgments.} We would like to thank Antonio Gir\~{a}o, Freddie Illingworth and Alex Scott for interesting discussions on this subject during a visit of the last author to Oxford University in 2023. 

The last author would also like to thank Lior Gishboliner and Tibor Szab\'{o} for discussions related to the topic back in 2020.
\bibliographystyle{abbrv}
\bibliography{sources.bib}
\end{document}